\newtheorem{lemma}{Lemma}
\newtheorem{theorem}{Theorem}
\newtheorem{corollary}{Corollary}
\newtheorem{pr}{Proposition}
\newcommand{\Aut}{\operatorname{Aut}}
\newcommand{\Out}{\operatorname{Out}}
\newcommand{\prk}{\operatorname{prk}}
\title{On groups having the prime graph as alternating and symmetric groups}
\author{Ilya Gorshkov and Alexey Staroletov\footnote{The work was supported by the program of fundamental scientific researches of the SB RAS \textnumero I.1.1., project \textnumero 0314-2016-0001}}
\date{\vspace{-5ex}}
\begin{document}
\newcommand{\Addresses}{{
  \bigskip
  \footnotesize

  I.~Gorshkov, \textsc{Universidade Federal do ABC, Santo André, São Paulo, Brazil;}\par\nopagebreak
  \textsc{Sobolev Institute of Mathematics, Novosibirsk, Russia}\par\nopagebreak
  \textit{E-mail address: } \texttt{ilygor8@gmail.com}

  \medskip

  A.~Staroletov, \textsc{Sobolev Institute of Mathematics, Novosibirsk, Russia;}\par\nopagebreak
  \textsc{Novosibirsk State University, Novosibirsk, Russia}\par\nopagebreak
  \textit{E-mail address: } \texttt{staroletov@math.nsc.ru}
}}

\maketitle
\begin{abstract}
The {\it prime graph} $\Gamma(G)$ of a finite group $G$ is the graph
whose vertex set is the set of prime divisors of $|G|$ and in which
two distinct vertices $r$ and $s$ are adjacent if and only if there exists an element of $G$ of order $rs$. 
Let $A_n$ ($S_n$) denote the alternating (symmetric) group of degree $n$. We prove that if $G$ is a finite group with $\Gamma(G)=\Gamma(A_n)$ or $\Gamma(G)=\Gamma(S_n)$, where $n\geq19$, then there exists a normal subgroup $K$
of $G$ and an integer $t$ such that $A_t\leq G/K\leq S_t$ and $|K|$ is divisible by at most one prime greater than $n/2$.
\end{abstract}

\section{Introduction}

Let $G$ be a finite group and $\omega(G)$ be its spectrum, that is, the set of orders of its elements. The {\it prime graph} $\Gamma(G)$ of $G$ is defined as follows. The vertex set is the set $\pi(G)$ of all prime divisors of the order of $G$. Two distinct primes $r, s\in\pi(G)$ regarded as vertices of the graph are adjacent 
if and only if $rs\in\omega(G)$. The alternating and symmetric groups of degree $n$ are denoted by $A_n$ and $S_n$, respectively. 

It is proved in \cite{13Gor} that if $L=A_n$, where $n\geq5$ with $n\neq6,10$, and $G$ is a finite group such with $\omega(G)=\omega(L)$ then $G\simeq L$. If $L=S_n$, then the same conclusion is established for $n\neq2,3,4,5,6,8,10$, see \cite{14Gor} and \cite{16GorGrish}. Therefore, for almost all $n$ the groups $A_n$ and $S_n$ are characterized by spectrum in the class of finite groups. Clearly, if $\omega(G)=\omega(H)$ then $\Gamma(G)=\Gamma(H)$. However, the converse implication is false in general, for instance, $\Gamma(A_5)=\Gamma(A_6)$ and $4\in\omega(A_6)\setminus\omega(A_5)$. As \cite{17Star} shows, if $A_n$ is characterized by its prime graph in the class of finite groups then $n=p$ or $n=p+1$, where $p$ is a prime such that $p-2$ is prime as well. 

Before those general results for $A_n$ and $S_n$ were proved, I.~Vakula \cite{10Vakula} showed that if $G$ is a finite group and $\omega(G)=\omega(A_n)$, where $n>21$, then $G$ has a chief factor isomorphic to $A_t$ for some $t$ with $\pi(A_t)=\pi(A_n)$.
The main goal of this paper is to prove that the same holds for $G$ provided that $\Gamma(G)=\Gamma(A_n)$ or $\Gamma(G)=\Gamma(S_n)$ for sufficiently large $n$.

\begin{theorem}
Suppose that $L\simeq A_n$ or $L\simeq S_n$, where $n\geq19$, and take the largest prime $p$ not greater than $n$. 
If $G$ is a finite group with $\Gamma(G)=\Gamma(L)$, then there exists a normal subgroup $K$ of $G$ such that $A_t\leq G/K\leq S_t$, where $t\geq p$. Moreover, the number $|K|$ is coprime to $p$ and divisible by at most one prime greater than $n/2$.
\end{theorem}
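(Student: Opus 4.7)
The plan is to combine the analysis of cocliques (independent sets) in $\Gamma(L)$ with a Gruenberg--Kegel-type reduction and the classification of finite simple groups. First I would describe the adjacencies around the large primes of $L$: for any prime $q\in\pi(L)$ with $n/2<q\le n$, a $q$-element of $A_n$ or $S_n$ is a single $q$-cycle, whose centralizer in $S_n$ equals $\langle\sigma_q\rangle\times S_{n-q}$, and every prime dividing $(n-q)!$ is at most $n-q<n/2$. Hence any two distinct primes $q_1,q_2\in\rho:=\pi(L)\cap(n/2,n]$ are nonadjacent in $\Gamma(L)$. By Bertrand's postulate and a brief inspection one checks that $|\rho|\ge 3$ for every $n\ge 19$, so $\rho$ is a coclique of size at least three in $\Gamma(G)=\Gamma(L)$.

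Next I would apply a Vasil'ev--Vdovin-type theorem on prime graphs with large cocliques: a finite group whose prime graph contains an independent set of size at least three has a nonabelian simple composition factor $S$, and the quotient of $G$ by its solvable radical is almost simple with socle $S$ and retains almost all of $\rho$ among the primes of $S$. The classification of finite simple groups enters here: one must traverse the list of nonabelian simple groups whose prime graph admits the combinatorial configuration forced by $\rho$ and which contains the largest prime $p$, using the spectral restrictions developed in \cite{13Gor,14Gor,16GorGrish} and the prime-graph analysis of \cite{17Star}. Every candidate other than an alternating group is ruled out, and matching the largest prime forces $S\simeq A_t$ with $t\ge p$. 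Pulling back the almost simple quotient yields a normal subgroup $K$ of $G$ with $A_t\le G/K\le S_t$.

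Finally I would bound $|K|$. Since the Sylow $p$-subgroup of $G$ is cyclic of order $p$, and $p$ already appears in $A_t$, combining a Frattini argument with self-centralization of $p$-cycles in the almost simple quotient yields $p\nmid|K|$. For the last clause, if two distinct primes $q_1,q_2>n/2$ both divided $|K|$, then since $q_i>n/2$ the Sylow $q_i$-subgroups of $K$ are cyclic of prime order, and a coprime-action argument produces an element of $G$ of order $q_1q_2$; this would give the edge $q_1q_2$ in $\Gamma(G)$, contradicting the coclique $\rho$.

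The principal obstacle is the CFSG step in the second paragraph: ruling out every nonabelian simple group other than alternating $A_t$ whose prime graph could accommodate the coclique $\rho$ together with the precise adjacencies required to reproduce $\Gamma(L)$. This typically demands arithmetic estimates on orders of semisimple and unipotent elements across classical and exceptional groups of Lie type in all characteristics, which is where the bulk of the technical effort in prime-graph recognition theorems of this kind is concentrated.
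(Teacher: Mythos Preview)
Your outline has the right skeleton, but the final paragraph contains two concrete errors that would make the argument fail. First, you assert that the Sylow $p$-subgroup of $G$ is cyclic of order $p$; this is unjustified, since $\Gamma(G)=\Gamma(L)$ gives no information about $|G|$. The paper's argument for $p\nmid|K|$ is different: assuming $p\mid|K|$, take a Sylow $p$-subgroup $P$ of $K$ and use Frattini to get $N_G(P)/N_K(P)\simeq G/K\ge A_t$. One then finds (Lemma~\ref{l:r in S}) a small prime $r\le s/2$ (with $s$ the second largest prime $\le n$) satisfying $p+r>n$; since $r\le t/2$, the Sylow $r$-subgroups of $A_t$ are noncyclic, hence so are those of $N_G(P)$, forcing $pr\in\omega(G)$, a contradiction. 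Second, your argument that two primes $q_1,q_2>n/2$ in $\pi(K)$ would yield an element of order $q_1q_2$ in $G$ is backwards: nothing in the setup produces such an element, and indeed the whole point is that $q_1,q_2$ are \emph{nonadjacent}. The correct reason is the $3$-coclique lemma (Lemma~\ref{3-coclique}/Lemma~\ref{l:S-factor}): since $\rho$ is a coclique of size $\ge 3$ and $S$ meets $\rho$, at most one element of $\rho$ can divide $|G|/|S|$, hence at most one divides $|K|$.

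Two further points. You describe $K$ as the solvable radical, but in general it is not: the paper explicitly notes that $K$ can be nonsolvable (e.g.\ $A_5\times A_{28}$), and solvability of $K$ is only obtained under the extra hypothesis $n-p\le 4$ (the Corollary). Also, while you correctly flag the CFSG step as the main obstacle, the references you cite (\cite{13Gor,14Gor,16GorGrish,17Star}) concern spectrum recognition and do not supply the needed tools here. The paper splits into $n\ge 1000$, where coclique-size bounds and a primitive-prime-divisor argument (Lemmas~\ref{l:r1r2r3r4}--\ref{l:ri in K}) eliminate classical groups, and $19\le n<1000$, where Zavarnitsine's tables \cite{Zavar} reduce to a handful of sporadic/exceptional candidates checked by hand.
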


Observe that in this theorem $K$ can be an arbitrary group. Namely, it was noted in \cite{17Star} that
if $r\leq n-p$ for each $r\in\pi(K)$ then the prime graph of every extension of $A_n$ by $K$ coincides with $\Gamma(A_n)$. For example, $\Gamma({A}_5\times{A}_{28})=\Gamma(A_{28})=\Gamma({A}_5\wr{A}_{28})$. Nevertheless, if $n-p$ is bounded then $K$ has some restrictions on its composition structure: \cite{Vas05} shows that if $n-p\leq3$ then $K$ is solvable. As a consequence of our main theorem we prove that $K$ is solvable even if $n-p=4$.

\begin{corollary} Suppose that $n\geq19$ and consider a finite group $G$ with $\Gamma(G)=\Gamma(A_n)$ or $\Gamma(G)=\Gamma(S_n)$. If $p$ is the largest prime not greater than $n$ and $n-p\leq4$
then $A_t\leq G/K\leq S_t$, where $K$ is the solvable radical of $G$ and $t$ is an integer satisfying $p\leq t\leq p+4$. In particular, $G$ has a unique nonabelian composition factor.
\end{corollary}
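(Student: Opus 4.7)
The plan is to apply the main theorem to obtain a normal subgroup $K_0 \trianglelefteq G$ with $A_t \leq G/K_0 \leq S_t$, $t \geq p$, $p \nmid |K_0|$, and at most one prime divisor of $|K_0|$ greater than $n/2$. The goal is then to show $p \leq t \leq p+4$ and that $K_0$ is solvable. For the bound on $t$: since $A_t$ is a section of $G$, its element orders lift to element orders of $G$, so $\Gamma(A_t) \subseteq \Gamma(G) = \Gamma(L)$; if $t > n$, there would be an edge $\{p,q\}$ in $\Gamma(A_t)$ for some prime $q$ with $n - p < q \leq t - p$, absent from $\Gamma(L)$; hence $t \leq n \leq p+4$. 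Once $K_0$ is shown to be solvable, $K_0 \leq R(G)$; conversely $R(G)/K_0$ is a solvable normal subgroup of $G/K_0$, whose socle is the non-abelian simple group $A_t$, forcing $R(G) = K_0$, with $A_t$ the unique non-abelian composition factor of $G$.

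The bulk of the proof will be showing $K_0$ is solvable, by contradiction. Assume $K_0$ is non-solvable, so $G$ has a chief factor $V = M/N$ contained in $K_0$ isomorphic to $U^k$ for some non-abelian simple group $U$. Since element orders in $V$ lift to element orders of $G$, $\omega(U) \subseteq \omega(V) \subseteq \omega(G)$ and $\Gamma(U) \subseteq \Gamma(L)$; also $\pi(U) \subseteq \pi(K_0)$, so $p \notin \pi(U)$ and $|\pi(U) \cap (n/2, n]| \leq 1$. Choose $x \in G$ of order $p$ (Cauchy's theorem, and $x \notin K_0$ as $p \nmid |K_0|$). For any prime $r \in \pi(C_{K_0}(x))$, an element $y$ of order $r$ in $C_{K_0}(x)$ commutes with $x$, giving $xy$ of order $pr \in \omega(G)$; adjacency rules in $\Gamma(A_n)$ and $\Gamma(S_n)$ then force $r \in \{2, 3\}$ whenever $n - p \leq 4$, so $\pi(C_{K_0}(x)) \subseteq \{2, 3\}$. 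The coprime-action identity $C_V(x) = C_M(x)N/N$ propagates this bound to $\pi(C_V(x)) \subseteq \{2, 3\}$.

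I would then analyze the action of $x$ on $V = U_1 \times \cdots \times U_k$ as a permutation $\sigma$ of the factors composed with automorphisms $\phi_i \in \Aut(U)$. A $p$-cycle in $\sigma$ produces a diagonal copy of $U$ inside $C_V(x)$; a trivial $\phi_i$ on a fixed factor embeds $U_i$ in $C_V(x)$; either situation would force $\pi(U) \subseteq \{2,3\}$, contradicting $|\pi(U)| \geq 3$. Hence $\sigma = 1$ and every $\phi_i$ has order exactly $p$, so $p \mid |\Aut(U)|$ and, since $p \nmid |U|$, $p \mid |\Out(U)|$. The main obstacle is the final, classification-based ruling out of such $U$. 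Alternating and sporadic $U$ have $|\Out(U)| \leq 4 < p$, impossible for $p \geq 19$. For $U$ of Lie type over $\mathbb{F}_{r^a}$, $p$ cannot divide the diagonal part of $\Out(U)$ (which would force $p \mid |U|$) nor the graph part (of order at most $6$), hence $p \mid a$; a Zsigmondy prime divisor $s$ of $r^a - 1$ then divides $|U|$ and satisfies $s \equiv 1 \pmod p$, and since $p$ is odd the smallest such prime is at least $2p + 1 > p + 4 \geq n$, contradicting $\pi(U) \subseteq \pi(n!)$. No such $U$ exists, $K_0$ is solvable, and the corollary follows.
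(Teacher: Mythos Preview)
Your argument is correct and follows the same underlying strategy as the paper, but you unpack in-line what the paper packages into two cited lemmas. The paper's proof is very short: from the theorem it gets $K$ with $A_t\le G/K\le S_t$, $t\ge p$, $p\nmid|K|$; it bounds $t\le p+4$ by observing $5p\notin\omega(G)$; and for solvability it takes a nonabelian composition factor $S$ of $K$, picks any $r\in\pi(S)$ with $r\ge5$, invokes Lemma~\ref{l:Vakula} (Vakula) to get $p\nmid|\Aut(S)|$, and then Lemma~\ref{l:primes_in_K} to force $rp\in\omega(G)$, contradicting $r+p>n$. Your orbit analysis of the order-$p$ element $x$ on the chief factor $V\cong U^k$ is exactly the mechanism behind Lemma~\ref{l:primes_in_K} (a $p$-cycle orbit yields a diagonal copy of $U$ in $C_V(x)$ because $x^p=1$ makes the composite twist trivial), and your Zsigmondy argument showing $p\nmid|\Out(U)|$ is a direct reproof of Lemma~\ref{l:Vakula} in this special case. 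So the two proofs differ only in whether these facts are cited or reproved.

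One small imprecision: your argument for the upper bound does not actually give $t\le n$. If $t=n+1$ and $n-p\in\{0,3\}$ then the interval $(n-p,\,t-p]$ contains no prime, so your stated contradiction fails. What your argument \emph{does} show cleanly is $t\le p+4$: if $t\ge p+5$ then $q=5$ lies in $(n-p,\,t-p]$ (since $n-p\le4$), giving the edge $\{p,5\}$ in $\Gamma(A_t)$ absent from $\Gamma(L)$. This is all the corollary needs, and matches the paper's one-line use of $5p\notin\omega(G)$; just replace ``hence $t\le n\le p+4$'' with ``hence $t\le p+4$''.
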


\section{Preliminaries}

\begin{lemma}{\rm\cite[Lemma~1]{17Star}}\label{l:semidir} Let $N$ be a normal elementary abelian subgroup of $G$ and $H=G/N$. Let $G_1=N\rtimes H$ be the natural semidirect product of $N$ by $H$. Then $\Gamma(G)=\Gamma(G_1)$.
\end{lemma}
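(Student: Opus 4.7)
The plan is to let $p$ be the prime for which $N$ is elementary abelian, and to observe that, since $N$ is abelian, conjugation in $G$ gives a well-defined action of $H = G/N$ on $N$; this is precisely the action used to build $G_1 = N \rtimes H$, so $N$ is the same $H$-module in both $G$ and $G_1$. Since $\pi(G) = \pi(G_1) = \pi(H) \cup \{p\}$, what remains is to show that for every pair of distinct primes $r, s$ we have $rs \in \omega(G)$ if and only if $rs \in \omega(G_1)$. I would split into the cases $p \notin \{r,s\}$ and $p \in \{r,s\}$.

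When $r, s \neq p$, I would first show that $rs \in \omega(G) \iff rs \in \omega(H)$: any $g \in G$ of order $rs$ projects to $\bar g \in H$ of order dividing $rs$, and if $\bar g$ had order $1$, $r$, or $s$ then a suitable power of $g$ would sit in $N$ with order coprime to $p$, hence be trivial, contradicting the order of $g$. Conversely a preimage of any order-$rs$ element of $H$ has order $rs\cdot p^{\varepsilon}$ for some $\varepsilon \in \{0,1\}$ (because $N$ has exponent $p$), so its $p'$-part has order exactly $rs$. The same reasoning applied to $G_1$ yields $rs \in \omega(G_1) \iff rs \in \omega(H)$, and the two chains merge.

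When $\{r,s\} = \{r,p\}$ with $r \neq p$, I would decompose an order-$rp$ element as the commuting product $g_r g_p$ of its $r$- and $p$-parts and split on whether $g_p \in N$. If $g_p \in N$, then $\overline{g_r} \in H$ has order $r$ and centralizes $g_p$ in the common $H$-module, so on either side we can pair a suitable $r$-element with $g_p \in N$ to produce an element of order $rp$. If $g_p \notin N$, then $\overline{g_r g_p} \in H$ has order $rp$, and by the exponent-$p$ argument used before, it lifts on the other side to an element of order $rp$ or $rp^2$, whose appropriate power has order $rp$. The main obstacle is the direction $G_1 \Rightarrow G$ here: without a splitting of $G$ over $N$ one cannot just view an element of $H$ inside $G$. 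I would resolve this via the elementary observation that, for $r \neq p$, any Sylow $r$-subgroup of $G$ meets $N$ trivially and so maps isomorphically onto a Sylow $r$-subgroup of $H$, whence every $r$-element of $H$ lifts to an $r$-element of $G$ of the same order acting on $N$ by the same endomorphism; in particular this lift preserves the fixed-point subgroup $C_N(\overline{g_r})$ that is used in the first subcase, completing the transfer.
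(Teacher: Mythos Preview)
The paper does not give its own proof of this lemma; it is quoted verbatim from \cite[Lemma~1]{17Star} without argument. Your proposal is a correct self-contained proof: the case split on whether $p\in\{r,s\}$ is the natural one, the exponent-$p$ property of $N$ is exactly what forces preimages of order-$m$ elements of $H$ to have order $m$ or $mp$, and your Sylow observation that an $r$-element of $H$ (for $r\neq p$) lifts to an $r$-element of $G$ inducing the \emph{same} automorphism of $N$ is the right way to handle the non-split side. The only cosmetic point is that in the subcase $g_p\in N$ you should perhaps make explicit that the conjugation action of any element on $N$ factors through $H$ (because $N$ is abelian), which is what guarantees that the lift $\tilde g_r\in G$ and the original $g_r\in G_1$ act identically on $N$; you use this implicitly when you say they act ``by the same endomorphism''.
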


\begin{lemma}{\rm\cite[Lemma~14]{10Vakula}}\label{l:Vakula} Let $S$ be a finite simple group.
Then each odd prime divisor of $Out(S)$ either divides $|S|$ or is less than or equal to $m/2$, where $m=max_{p\in\pi(S)}p$.
\end{lemma}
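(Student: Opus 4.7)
The plan is to proceed via the classification of finite simple groups, reducing almost all cases to the generic situation of groups of Lie type, for which I would combine the standard structure of $\Out(S)$ with Zsygmondy's theorem.

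First I would dispose of the non-Lie cases. For sporadic simple groups (and the Tits group) $|\Out(S)|\le 2$, so no odd prime divisor appears. For alternating groups $A_n$ with $n\ge 5$, $|\Out(A_n)|\in\{2,4\}$, and again the statement is vacuous. For cyclic simple groups of prime order $p$, any odd prime divisor $q$ of $|\Out(S)|=p-1$ satisfies $q\le(p-1)/2<p/2=m/2$ because $p-1$ is even.

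The core case is groups of Lie type. Writing $q=p^f$ for the underlying field (after accounting for any twisting) and using the standard decomposition $|\Out(S)|=|D|\cdot f\cdot|\Gamma|$ into diagonal, field, and graph automorphisms, I would handle each factor separately. Any odd prime dividing $|D|$ divides $\gcd(n,q\pm 1)$ or a similar small cyclotomic quantity, and therefore divides a torus factor of $|S|$, so it lies in $\pi(S)$. The group $\Gamma$ has order dividing $2$ except in type $D_4$, where it may divide $6$; the only new odd prime then is $3$, and $3\in\pi(D_4(q))$ for every $q$. So the remaining essential case is that of an odd prime $q_0$ with $q_0\mid f$ and $q_0\notin\pi(S)$. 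For this I would invoke Zsygmondy's theorem on $p^{q_0}-1$: since $q_0\ge 3$ is prime we avoid the sole exception $(p,n)=(2,6)$, so there is a primitive prime divisor $\ell$ of $p^{q_0}-1$. The multiplicative order of $p$ modulo $\ell$ equals $q_0$, hence $q_0\mid\ell-1$; because $\ell$ is odd, $\ell-1$ is even, forcing $\ell\ge 2q_0+1$ and thus $q_0<\ell/2$. Since $q_0\mid f$, the integer $p^{q_0}-1$ divides $q-1$, which (up to a bounded central factor) contributes to $|S|$ in every Lie type family, so $\ell\in\pi(S)$, and therefore $\ell\le m$ gives $q_0<m/2$.

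The main obstacle I anticipate is the last bookkeeping step: checking uniformly across families that the primitive prime $\ell$ genuinely lies in $\pi(S)$. For untwisted classical and exceptional types this is immediate from the split torus factor $q-1$. For the Suzuki and Ree groups $^2B_2(q)$, $^2G_2(q)$, $^2F_4(q)$, where the order formulas involve unusual cyclotomic-like factors $q^2+1$, $q\pm\sqrt{2q}+1$, etc., one still has $q-1\mid |S|$, but one must verify this and be careful that the parameter $f$ controlling field automorphisms is the correct exponent in $q=p^f$ rather than a twisted variant. Beyond this small family-by-family inspection, the argument is a single clean application of Zsygmondy.
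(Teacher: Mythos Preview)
The paper does not give its own proof of this lemma: it is stated with a citation to \cite[Lemma~14]{10Vakula} and used as a black box. So there is no argument in the paper to compare yours against.

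That said, your sketch is sound and is essentially the standard route to this fact. The reduction to field automorphisms via the decomposition $|\Out(S)|=|D|\cdot f\cdot|\Gamma|$ is correct, and the Zsygmondy step is the right engine: from an odd prime $q_0\mid f$ you produce a primitive prime divisor $\ell$ of $p^{q_0}-1$, get $q_0\mid \ell-1$ with $\ell$ odd, hence $\ell\geq 2q_0+1$, and then $\ell\in\pi(S)$ forces $q_0\leq m/2$. Your caveat about the ``central factor'' is the only place that needs a line of care: for $S=L_n(q)$, say, one should note that if $\ell\mid q-1$ then $\ell$ divides every $q^i-1$, so the $\ell$-part of $\prod_{i\geq 2}(q^i-1)$ exceeds the $\ell$-part of $\gcd(n,q-1)$ whenever $n\geq 2$ and $\ell$ is odd, whence $\ell\in\pi(S)$. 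The analogous check goes through in each family, including the Suzuki and Ree groups, where indeed $q-1\mid |S|$ and the field-automorphism order is the exponent $f$ in $q=p^f$ (odd in those cases), so your Zsygmondy argument applies verbatim. With that small bookkeeping made explicit, your proof is complete.
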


\begin{lemma}{\rm\cite[Lemma 3.6]{15Vas}}\label{l:hallhigman} For distinct primes $s$ and $r$, consider
a semidirect product $H$ of a normal $r$-subgroup $T$ and a cyclic 
subgroup $C=\langle g\rangle$ of order~$s$ with $[T,g]\neq1$.
Suppose that $H$ acts faithfully on a vector space $V$ of positive characteristic
$t$ not equal to~$r$. If the minimal polynomial of $g$ on $V$ does not equal $x^s-1$ then

\emph{(i)} $C_T(g)\neq1;$

\emph{(ii)} $T$ is nonabelian\emph{;}

\emph{(iii)} $r=2$ and $s$ is a Fermat prime.
\end{lemma}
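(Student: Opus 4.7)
The plan is to verify each conclusion by contrapositive: assume the relevant condition fails and deduce that the minimal polynomial of $g$ on $V$ equals $x^s-1$. Throughout I would work with $\bar V=V\otimes_{\mathbb{F}_t}\overline{\mathbb{F}_t}$, on which $g$ has the same minimal polynomial, and would exploit $t\ne r$ in the form $\gcd(t,|T|)=1$ so that $T$ acts semisimply on $\bar V$.

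For part (i), suppose $C_T(g)=1$, so that $H=T\rtimes C$ is a Frobenius group with kernel $T$ and complement $C$. Maschke's theorem applied to $T$ gives a decomposition $\bar V|_T=\bigoplus_\psi W_\psi$ into $T$-isotypic components, indexed by the irreducible $\overline{\mathbb{F}_t}T$-characters $\psi$ appearing in $\bar V$. The element $g$ permutes these via $W_\psi\mapsto W_{\psi^g}$, and the Frobenius condition (together with Brauer's permutation lemma, after noting that by coprimality of $|C|$ and $|T|$ the complement $C$ acts without fixed points on nontrivial conjugacy classes of $T$) forces $g$ to act freely on the nontrivial $\psi$. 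Since $H$ acts faithfully on $V$ and $T\ne 1$, some nontrivial $\psi$ appears; for any nonzero $w\in W_\psi$ the vectors $w,gw,\dots,g^{s-1}w$ lie in $s$ distinct isotypic components, hence are linearly independent, so the minimal polynomial of $g$ has degree $s$ and divides $x^s-1$, and therefore equals $x^s-1$.

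For part (ii), suppose $T$ is abelian. Then $\bar V$ decomposes into $T$-weight spaces $V_\lambda$ and $g$ permutes them in orbits of size $1$ or $s$. An orbit of size $s$ again produces minimal polynomial $x^s-1$ by the cyclic-vector argument of (i). If every orbit has size $1$, then $g$ fixes every weight and $T$ acts on each $V_\lambda$ by the scalar $\lambda$, so the actions of $g$ and $T$ commute on every $V_\lambda$ and hence on $\bar V$; by faithfulness this yields $[g,T]=1$, contradicting $[T,g]\ne 1$.

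For part (iii), the main difficulty, one may now assume $C_T(g)\ne 1$ and $T$ nonabelian, and must conclude $r=2$ with $s$ a Fermat prime. I would follow the classical Hall--Higman scheme. Using the nilpotence of $T$ and $[T,g]\ne 1$, I would reduce to the case where $T$ is a \emph{special} $r$-group with $Z(T)=[T,T]=\Phi(T)$ elementary abelian of exponent $r$, and $C$ acts irreducibly on the $\mathbb{F}_rC$-module $T/Z(T)$. The $C$-equivariant nondegenerate commutator pairing $[\,\cdot\,,\,\cdot\,]\colon T/Z(T)\times T/Z(T)\to Z(T)$ then ties the eigenvalues of $g$ on $T/Z(T)$ to those on $Z(T)$: if $\zeta$ is a primitive $s$th root of unity and $\zeta^i$ occurs on $T/Z(T)$, then some $\zeta^i\zeta^j$ must occur on $Z(T)$. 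Combining this with Zsigmondy's theorem on the multiplicative order of $r$ modulo $s$, and with the hypothesis that $g$'s minimal polynomial on $V$ omits some $s$th root of unity, will force $s-1$ to be a power of $2$ and $r=2$, i.e.\ $s$ a Fermat prime. The principal obstacle is precisely this last step: propagating the defect in the minimal polynomial of $g$ on $V$ down to a usable defect of the action on $T/Z(T)$ while preserving the structural reductions on $T$---this is the classical Hall--Higman difficulty.
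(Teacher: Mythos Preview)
The paper does not supply a proof of this lemma at all: it is quoted verbatim from \cite[Lemma~3.6]{15Vas} and used as a black box, so there is no argument in the paper to compare yours against.

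On the substance of your sketch: your treatments of (i) and (ii) are correct and standard. For (i), the key point that a Frobenius complement of prime order permutes the nontrivial irreducible $\overline{\mathbb{F}_t}T$-characters freely is exactly what Brauer's permutation lemma gives once you know the action on nontrivial conjugacy classes is free, and the cyclic-vector conclusion follows. For (ii), the weight-space argument is clean; note that (ii) is in fact a special case of (i) once (i) is established, since an abelian $T$ with $[T,g]\ne 1$ has $C_T(g)\ne T$, and one can pass to the quotient $T/C_T(g)$ to arrange $C_T(g)=1$---but your direct argument is fine.

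Part (iii) is where you have a genuine gap, and you acknowledge as much. Your outline (reduce to $T$ special with $C$ irreducible on $T/Z(T)$, exploit the $C$-equivariant commutator form) is the correct skeleton, but the sentence ``Combining this with Zsigmondy's theorem \dots\ will force $s-1$ to be a power of $2$ and $r=2$'' is not an argument. What is actually needed is an eigenvalue count on $\bar V$: after reducing to $T$ extraspecial of order $r^{1+2a}$ with $g$ acting irreducibly on $T/Z(T)$ (so $r^a\equiv\pm1\pmod s$), one shows that on a faithful irreducible $\overline{\mathbb{F}_t}H$-module the multiplicity of each $s$th root of unity as an eigenvalue of $g$ is $(r^a\mp1)/s$ or $(r^a\mp1)/s\pm1$, and that the latter can vanish only when $r^a=s-1$, forcing $r=2$, $a=1$, and $s=2^1+1$ a Fermat prime (and more generally $s=2^a+1$). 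This is precisely Hall--Higman Theorem~B and its descendants; the step you label ``the classical Hall--Higman difficulty'' is the entire content of (iii), not a detail to be filled in. If you want a self-contained write-up you should either reproduce that eigenvalue computation or cite Hall--Higman (or Shult/Berger/Gross for the cross-characteristic refinements) directly, as the source \cite{15Vas} does.
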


For a positive integer $n$, we denote the set of prime divisors of $n$ by $\pi(n)$.

If $n$ is a nonzero integer and $r$ is an odd prime with $(r,n)=1$ then $e(r,n)$
denotes the multiplicative order of $n$ modulo $r$. Given an odd integer $n$, we put $e(2,n)=1$ if
$n\equiv 1\pmod 4$, and $e(2,n)=2$ otherwise.

Fix an integer $a$ with $|a|>1$. A prime $r$ is said to be a \emph{primitive prime divisor} of
$a^i-1$ whenever $e(r,a)=i$. Denote by $r_i(a)$ some primitive prime divisor of $a^i-1$, if exists, and by $R_i(a)$ 
the set of all these divisors. Zsigmondy proved \cite{Zs} that primitive prime divisors exist for almost all pairs $(a,i)$.

\begin{lemma}\emph{(Zsigmondy \cite{Zs})}\label{l:zsigmondy}
Given an integer $a$ with $|a|>1$, for every positive integer $i$ the set $R_i(a)$ is nonempty
except for the pairs $(a,i)\in\{(2,1),(2,6),(-2,2),(-2,3),(3,1),(-3,2)\}$.
\end{lemma}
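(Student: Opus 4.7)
The plan is to prove Zsigmondy's theorem via the theory of cyclotomic polynomials. The first step is to reduce the claim to the case $a\geq 2$: when $a=-b$ with $b\geq 2$, the multiplicative order of $-b$ modulo an odd prime $r$ is either the order $m$ of $b$ modulo $r$ or $2m$, so primitive prime divisors of $(-b)^i-1$ correspond to primitive prime divisors of $b^j-1$ for a suitable $j\in\{i,2i\}$; a short case analysis shows that the three exceptions $(-2,2),(-2,3),(-3,2)$ arise precisely from the base-$b$ cases that are themselves either excluded or degenerate.

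For $a\geq 2$, the starting point is the factorization
\[ a^n-1=\prod_{d\mid n}\Phi_d(a),\]
where $\Phi_d$ denotes the $d$-th cyclotomic polynomial. A prime $r$ is primitive for $a^n-1$ exactly when $r\mid\Phi_n(a)$ and $r\nmid a^d-1$ for every proper divisor $d$ of $n$. The classical fact due to Bang is that if $r\mid\Phi_n(a)$ but $e(r,a)<n$, then $r\mid n$; more precisely, such an $r$ is necessarily the largest prime divisor of $n$ and the $r$-adic valuation of $\Phi_n(a)$ is exactly one. Hence to produce a primitive prime divisor it suffices to show that $\Phi_n(a)>P(n)$, where $P(n)$ denotes the largest prime factor of $n$.

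The key analytic estimate is
\[ \Phi_n(a)=\prod_{\substack{1\leq k\leq n\\ \gcd(k,n)=1}}(a-\zeta_n^k),\qquad |\Phi_n(a)|\geq (a-1)^{\varphi(n)},\]
where $\zeta_n=e^{2\pi i/n}$. For $a\geq 3$ this already exceeds $P(n)\leq n$ outside a handful of very small $n$. For $a=2$ the bound $(a-1)^{\varphi(n)}=1$ is useless, but a refinement using $|2-\zeta_n^k|>1$ whenever $\zeta_n^k\neq 1$ yields $\Phi_n(2)>n$ as soon as $\varphi(n)$ is reasonably large. The main obstacle, and the technical heart of the argument, is turning these asymptotic estimates into the exact list: one has to exhaust the finitely many small $(a,n)$ by hand, verifying for instance that $(2,1)$ gives $2^1-1=1$ (no primes at all), that $(2,6)$ gives $2^6-1=63=3^2\cdot 7$ with $3\mid 2^2-1$ and $7\mid 2^3-1$, and that no other pair with $|a|\geq 2$ and small $n$ slips past the Bang-type bound. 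Getting this bookkeeping right, so that exactly the six listed exceptions appear and no others, is where the delicate work of the proof lies.
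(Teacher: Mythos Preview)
The paper does not supply a proof of this lemma; it is quoted as Zsigmondy's classical theorem with a reference to \cite{Zs}, so there is no argument in the paper to compare your attempt against. Your outline follows the standard modern route via cyclotomic polynomials (essentially the Bang--Birkhoff--Vandiver approach), and the overall strategy is sound.

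Two points would need tightening if you flesh this out. First, in the reduction from $a=-b$ to $b>0$ you write that primitive divisors of $(-b)^i-1$ correspond to primitive divisors of $b^j-1$ with $j\in\{i,2i\}$; in fact when $i\equiv 2\pmod 4$ the correct correspondence is $j=i/2$ (compare Lemma~\ref{l:R(-u)} in the paper), and it is precisely this third case that produces the exceptions $(-2,2)$ and $(-3,2)$ from the positive exceptions $(2,1)$ and $(3,1)$. Second, for $a=2$ the observation $|2-\zeta_n^{k}|>1$ by itself only yields $\Phi_n(2)>1$, not $\Phi_n(2)>P(n)$; you need a genuinely quantitative lower bound, typically obtained from the M\"obius-inverted product $\Phi_n(a)=\prod_{d\mid n}(a^d-1)^{\mu(n/d)}$, to force $\Phi_n(2)$ above the largest prime factor of $n$. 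With those two refinements your sketch becomes the usual textbook proof.
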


For a classical group $S$, we denote by $\prk(S)$ its dimension if $S$ is a linear or unitary
group and its Lie rank if $S$ is a symplectic or orthogonal group.

\begin{lemma}{\rm\cite[Lemma 3.8]{15Vas}}\label{l:adjanisotrop}
For a simple classical group  $S$ over a field of order $u$ and characteristic $v$ with
$\prk(S)=m\geq4$, put

$$j=\begin{cases}
m & \text{ if }S\simeq L_{m}(u),\\
2m-2 & \text{ if either }S\simeq O^+_{2m}(u)\text{ or }S\simeq U_{m}(u)\text{ and }m\text{ is even,}\\
2m & \text{ otherwise.}
\end{cases}$$

Then $(r_j(u),|P|)=1$ for every proper parabolic
subgroup $P$ of~$S$. If $i\neq j$ and a primitive prime divisor $r_i(u)$ lies in $\pi(L)$ then
there is a proper parabolic subgroup $P$ of $S$ such that $r_i(u)$ lies in~$\omega(P)$. In
particular, if two distinct primes $r,s\in\pi(S)$ do not divide the order of any proper parabolic
subgroup of $S$ then $r$ and $s$ are adjacent in $\Gamma(S)$.
\end{lemma}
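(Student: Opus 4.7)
The plan is to proceed by a case analysis on the Lie type of $S$. The integer $j$ is precisely the Coxeter number of the Weyl group of the underlying algebraic group (respectively, the twisted Coxeter number for $U_m(u)$ and ${}^2D_m(u)$), so $r_j(u)$ is the prime that parametrizes a Coxeter (i.e.\ anisotropic) maximal torus $T$ of $S$. The whole lemma is really the assertion that $T$ avoids every proper parabolic while every other maximal torus is contained in some proper parabolic's Levi factor.

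For part (a), I would invoke the standard Borel--Tits description: every proper parabolic $P = U \rtimes L$ has unipotent radical $U$, which is a $v$-group, and a Levi complement $L$ which, after passing to a subgroup of finite index coprime to interesting primes, is a central product of $\mathrm{GL}_k(u^{1\text{ or }2})$ with a smaller classical group of the same type as $S$. For each of the four classes (linear, unitary, symplectic, orthogonal split and non-split) I would write down the possible Levi orders as products of $u^i - 1$ and $u^i + 1$, note that in every case the indices $i$ that appear are strictly smaller than $j$, and then use Zsigmondy together with $r_j(u) \neq v$ to conclude that $r_j(u) \nmid |L| \cdot |U|$.

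For part (b), if $i \neq j$ and $r = r_i(u)$ divides $|S|$, then $i$ must be among the indices that actually appear in the factorisation of $|S|$. I would exhibit a concrete parabolic whose Levi has an appropriate $\mathrm{GL}_k$ or classical factor of Lie rank $\lceil i/2 \rceil$ or so, ensuring that $r \in \pi(L) \subseteq \pi(P)$, hence $r \in \omega(P)$. The adjacency consequence in the last sentence of the statement is then immediate: if $r$ and $s$ both avoid every proper parabolic, part (b) forces $r, s \in R_j(u)$, and the cyclic part of the Coxeter torus $T$ has order divisible by the full $v'$-part of $u^j - 1$, giving an element of order $rs$.

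The main obstacle will be the case analysis for the orthogonal group $O^+_{2m}(u)$ (and its twisted cousin $U_m(u)$ with $m$ even), where $j = 2m - 2$ rather than the naive value $2m$. One has to verify carefully that the ``large'' parabolic factor $O^+_{2(m-k)}(u)$ (respectively, $\mathrm{GU}_{m-2k}(u)$) in the Levi of $P_k$ has order involving only $u^i \pm 1$ with $i \leq m - k$ or $i \leq 2(m-k-1)$, none of which is a multiple of $2(m-1)$ once $m \geq 4$; and that the Siegel parabolic $P_m$, with Levi $\mathrm{GL}_m(u)$, contributes only factors $u^i - 1$ for $i \leq m$, again missing $2m - 2$. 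The unitary case is the most delicate because its torus orders are twisted by signs and one must be careful to track which primitive prime divisors of $u^{2k} - 1$ genuinely appear in $\pi(U_m(u))$.
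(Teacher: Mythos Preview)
The paper does not supply a proof of this lemma: it is quoted verbatim from \cite[Lemma~3.8]{15Vas} and used as a black box. So there is no ``paper's own proof'' to compare against here.

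That said, your sketch is the standard argument and is essentially how the result is established in \cite{15Vas}. The key structural fact is exactly what you identify: in a proper parabolic $P=U\rtimes L$ the unipotent radical $U$ is a $v$-group, and the Levi $L$ is (up to centre) a product of general linear groups over $\mathbb{F}_u$ or $\mathbb{F}_{u^2}$ with a classical group of the same type but smaller $\prk$, so the cyclotomic indices occurring in $|L|$ are all strictly below $j$. Conversely, any $r_i(u)\in\pi(S)$ with $i\neq j$ is caught by the Levi of a suitable maximal parabolic. Your remark that the unitary and $O^+_{2m}$ cases need the most bookkeeping is accurate; in both situations one must check that the largest index in the Levi order really is $2m-4$ (resp.\ $m$, $2(m-k-1)$, etc.) and never reaches $2m-2$.

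One small point of caution in your last paragraph: the phrase ``the cyclic part of the Coxeter torus $T$ has order divisible by the full $v'$-part of $u^j-1$'' is not literally true in every type (for $O_{2m}^+(u)$ the relevant torus has order $(u^{m-1}+1)(u\mp1)/d$, not a divisor of $u^{2m-2}-1$ alone). What you actually need, and what holds, is that all primes of $R_j(u)\cap\pi(S)$ lie in a single cyclic factor of that torus, so any two of them are adjacent. With that wording adjusted, the argument goes through.
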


\begin{lemma}{\rm\cite[Lemma 3.7]{15Vas}}\label{l:good} 
Let $S$ be a simple classical group over a field of order~$u$ and characteristic $v$ with $\prk(S)>4$.
Suppose that a prime $s$ divides the order of a proper parabolic subgroup of $S$ and $(s,v(u^2-1))=1$. 
Then $S$ includes a subgroup $H$ which is a semidirect product of a normal $v$-subgroup $T$ and a cyclic subgroup
$C=\langle g\rangle$ of order $s$ with $[T,g]\neq1$ and at least one of three assertions in Lemma~\emph{\ref{l:hallhigman}} fails for~$H$.
\end{lemma}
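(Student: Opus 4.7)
The plan is to construct $H$ inside a suitable proper parabolic subgroup $P$ of $S$ whose order is divisible by $s$. Writing the Levi decomposition $P = U \rtimes L$ with $U$ the unipotent radical (a $v$-group) and $L$ a Levi complement, the hypothesis $(s,v)=1$ forces $s$ to divide $|L|$, so I fix an element $g \in L$ of order $s$; the condition $(s,u^2-1)=1$ then gives $e(s,u) \geq 3$ (in particular $s$ is odd). Any $\langle g\rangle$-invariant $v$-subgroup $T \leq U$ on which $g$ acts nontrivially yields a candidate $H = T \rtimes \langle g\rangle$ satisfying $[T,g]\neq 1$. To make at least one of assertions~(i)--(iii) of Lemma~\ref{l:hallhigman} fail, it suffices either to choose $T$ abelian (defeating~(ii)), or, in the exceptional case $v = 2$ with $s$ a Fermat prime, to arrange $C_T(g) = 1$ (defeating~(i)).

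Outside that exceptional case, the argument reduces to finding an abelian $T$, by case analysis on the Lie type of $S$. For $S \simeq L_m(u)$ every maximal parabolic $P_k$ has elementary abelian unipotent radical $U \cong \mathbb{F}_u^{k(m-k)}$, so one chooses $k$ with $s \mid |GL_k(u)|$ (possible because $m > 4$ and $e(s,u) \leq m$) and takes $T = U$; the action of $g$ on $U$ is nontrivial since $g$ acts nontrivially on the natural module of one Levi factor. For the symplectic, orthogonal, and unitary families I would use the Siegel parabolic (stabilizer of a maximal totally isotropic subspace): its unipotent radical is abelian, isomorphic to symmetric, skew-symmetric, or Hermitian matrices respectively, and $T = U$ works provided the Siegel Levi has order divisible by $s$. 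If not, $s$ divides the order of some non-Siegel proper parabolic with non-abelian $U$, and I would pass to the center $Z(U)$: it is abelian and $\langle g\rangle$-invariant, and a direct check with the root datum using $e(s,u) \geq 3$ shows $g$ acts nontrivially on $Z(U)$, so $T = Z(U)$ again defeats~(ii).

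The main obstacle is the combinatorial case analysis, which must reconcile the constraints $\prk(S) > 4$, $(s,v(u^2-1))=1$, and $s \mid |P|$ uniformly across all Lie types and classes of maximal parabolics. The most delicate situation is $v = 2$ with $s$ Fermat, since then~(iii) holds automatically and the ``abelian $T$'' ploy is unavailable if the only accessible unipotent radicals are non-abelian with $Z(U)$ fixed pointwise by $g$. In that corner one targets~(i) instead: using that an element of order $s$ with $e(s,u) \geq 3$ acts without nonzero fixed vectors on any irreducible $\mathbb{F}_v\langle g\rangle$-constituent of the root filtration of $U$, one selects $T$ as (the subgroup corresponding to) such a constituent and obtains $C_T(g)=1$. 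A careful verification via the Chevalley commutator formulas is needed here to ensure such a $\langle g\rangle$-invariant $T$ is genuinely a subgroup of $U$ on which the action is faithful.
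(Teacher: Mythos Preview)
The paper does not supply a proof of this lemma at all: it is quoted from \cite[Lemma~3.7]{15Vas} and used as a black box. There is therefore no in-paper argument to compare your sketch against; the authors rely entirely on the cited source.

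As a standalone attempt, your outline heads in the right direction---working inside a Levi decomposition $P=U\rtimes L$, taking $g\in L$ of order $s$, and then seeking a $\langle g\rangle$-invariant $v$-subgroup $T\leq U$ that is either abelian (so assertion~(ii) fails) or fixed-point-free under $g$ (so assertion~(i) fails). However, several of the type-by-type claims are left as bare assertions. The statements that ``a direct check with the root datum'' shows $g$ acts nontrivially on $Z(U)$ in the non-Siegel cases, and that in the $v=2$/Fermat corner one can always isolate a $\langle g\rangle$-invariant root-layer subgroup $T$ with $C_T(g)=1$, are precisely the substantive content such a lemma must establish; writing ``a careful verification via the Chevalley commutator formulas is needed here'' is a description of the missing work, not the work itself. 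If your intention is to replace the citation by a self-contained argument, those verifications across all classical types and all relevant parabolics are where the actual proof lies. If your intention is merely to match what the paper does, the correct move is simply to cite \cite[Lemma~3.7]{15Vas} and give no proof.
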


Recall that a subset of vertices of a graph is called a {\it coclique} if
no pair of its vertices are adjacent. Denote by $t(G)$ the greatest size of a coclique in $\Gamma(G)$.
We refer to a coclique containing $r$ as an $\{r\}$-coclique.
If $r\in\pi(G)$ then $t(r,G)$ denotes the greatest size of $\{r\}$-cocliques. 

\begin{lemma}{\rm\cite[Lemma~15]{17Star}}\label{l:r1r2r3r4}
Consider a simple classical group $S$ and put $m=\prk(S)$. If $t(S)\geq15$ then
there exist $r_1,r_2,r_3,r_4\in\pi(S)\setminus\{v\}$ such that the following claims hold:

\noindent$(i)$ $r_1$ is adjacent to $r_2$ and not adjacent to $r_4$ in $\Gamma(S)$,

\noindent$(ii)$ $r_3$ is adjacent to $r_4$ and not adjacent to $r_2$ in $\Gamma(S)$,

\noindent$(iii)$ if $S$ is unitary then $e(r_i,-u)\geq\max(7,(m-5)/3)$ for $1\leq i\leq 4$ and
otherwise $e(r_i,u)\geq\max(7,(m-5)/3)$ for $1\leq i\leq 4$.
\end{lemma}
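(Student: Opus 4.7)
The plan is to combine the Vasiliev--Vdovin adjacency criterion for primitive prime divisors in classical groups with the observation that adjacency is cheaply produced by the divisibility relation $i_1 \mid i_2$: two primes $r_{i_1},r_{i_2}$ with $i_1\mid i_2$ lie in a single cyclic maximal torus of $S$ and so are automatically adjacent. Concretely, for $S$ of Lie rank $m$ over $\mathbb{F}_u$ there is a type-dependent transform $\eta$ and a bound $m'$ (in the linear case $\eta=\mathrm{id}$ and $m'=n$; for the symplectic and orthogonal groups $\eta(k)=k$ for odd $k$ and $\eta(2k)=k$, with $m'=m$; in the unitary case the analogous rule applies with $u$ replaced by $-u$) such that $r_i\sim r_j$ iff $\eta(i)+\eta(j)\le m'$ or $\eta(\mathrm{lcm}(i,j))\le m'$.

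First I would extract from $t(S)\ge 15$ two distinct odd primes $p<q$ in the ``adjacency-relevant'' range---roughly $(n/3,n/2]$ for linear and $(m/2,m]$ for the other types---with $p\ge\max(7,(m-5)/3)$. A coclique of size $15$ is built from primitive prime divisors whose $\eta$-values lie in $(m'/2,m']$, and a standard prime-counting estimate in that interval, together with Lemma~\ref{l:zsigmondy} to secure the existence of primitive divisors, shows that in each family this forces at least two distinct primes $p$ and $q$ satisfying the required lower bound (with equality holding in a few borderline cases such as $L_{30}(u)$ with $p=11,\,q=13$, which are handled by direct inspection).

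With $p<q$ selected, set $i_1=p$, $i_2=2p$, $i_3=q$, $i_4=2q$, and let $r_{i_k}$ denote the corresponding primitive prime divisor of $u^{i_k}-1$ (respectively of $(-u)^{i_k}-1$ in the unitary case). Divisibility forces $\eta(\mathrm{lcm}(i_1,i_2))=\eta(2p)\le m'$, hence $r_{i_1}\sim r_{i_2}$; similarly $r_{i_3}\sim r_{i_4}$. For the non-adjacencies, $\eta(i_1)+\eta(i_4)=p+q>m'$ and $\eta(\mathrm{lcm}(i_1,i_4))$ equals $pq$ (or $2pq$ in the linear case), which exceeds $m'$ whenever $p,q>m'/2$; this yields $r_{i_1}\not\sim r_{i_4}$, and the pair $r_{i_2},r_{i_3}$ is handled by the same computation. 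Each $e$-value $e(r_{i_k},\pm u)$ lies in $\{p,q,2p,2q\}$ and is therefore at least $p$, which exceeds $\max(7,(m-5)/3)$ by construction.

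The main obstacle is the uniform treatment of the six classical families, since $\eta$ and $m'$ differ among them, and for $D_m^{\pm}$ one must additionally verify a sign condition on tori: realising $r_{2p}$ in $O^{\epsilon}_{2m}(u)$ requires a partition of $m$ containing a part $p$ carrying a ``minus'' sign whose complementary signs multiply to $\epsilon$, which is achievable once $m>p$. A secondary subtlety is the borderline case in which $t(S)=15$ is barely attained and the primes in the coclique range are too few or too close; this will be handled by expanding the pool of admissible indices to $\{2^ap:a\ge 0\}$ and re-examining the four relations in each type, which leaves only a finite explicit list of parameter pairs to verify by hand.
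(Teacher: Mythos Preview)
This lemma is not proved in the paper; it is simply quoted from \cite{17Star}, so there is no in-paper argument to compare against. That said, your sketch has a genuine gap for the symplectic and orthogonal families. The claim that $i_1\mid i_2$ forces $r_{i_1}\sim r_{i_2}$ via a common cyclic torus is correct for $L_n(u)$ and $U_n(u)$ --- the Vasil$'$ev--Vdovin criterion for $A_{n-1}$ and ${}^2A_{n-1}$ really does carry a divisibility clause, and your quadruple $(p,2p,q,2q)$ with $p,q$ in roughly $((n-5)/3,n/2]$ works there. But it is false for $B_m$, $C_m$, $D_m^{\pm}$. In $Sp_{2m}(u)$, for an odd prime $p$ with $m/2<p\le m$, the divisor $r_p$ lies only in torus factors $u^a-1$ with $p\mid a$, whereas $r_{2p}$ divides $u^p+1$; no single factor $u^a\pm1$ with $a\le m$ contains both, and two separate parts would cost $\ge 2p>m$. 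Hence $r_p\not\sim r_{2p}$, and the two edges you need simply are not there. Your proposed criterion ``$\eta(i)+\eta(j)\le m'$ or $\eta(\mathrm{lcm}(i,j))\le m'$'' is not the Vasil$'$ev--Vdovin rule for these types: there is no $\mathrm{lcm}$ clause, only the additive one (plus a sign-product refinement on the boundary for $D_m^{\pm}$).

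In fact, with the bare additive rule the pattern in the lemma is impossible for $B_m$ and $C_m$: writing $a_j=\eta(e(r_j,u))$, the four required relations give $a_4>a_2$ (from $a_1+a_2\le m<a_1+a_4$) and $a_2>a_4$ (from $a_3+a_4\le m<a_3+a_2$). For $D_m^{\pm}$ the configuration \emph{can} be realised, but only by placing all four $\eta$-values exactly on the boundary $\eta(k)+\eta(l)=m$ and using the parity of the torus sign product to separate edges from non-edges --- e.g.\ $i_1=a$, $i_2=m-a$, $i_3=2a$, $i_4=2(m-a)$ with $a$ and $m-a$ odd --- which is a different mechanism from your divisibility trick and does not follow from your outline. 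You should look at the original argument in \cite{17Star} to see how types $B_m$ and $C_m$ are actually handled (or whether the hypotheses there differ from the version quoted in this paper).
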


\begin{lemma}\label{l:R(-u)} 
Consider integers $a$ and $i$ with $|a|>1$ and $i>0$. The the following claims hold:

\noindent$(i)$ if $i$ is odd then $R_i(-a)=R_{2i}(a)$, and if $i$ is a multiple of $4$ then $R_i(-a)=R_i(a)$;

\noindent$(ii)$ if $r\in R_i(a)$ and $r>2$ then $r-1$ is divisible by $i$.
\end{lemma}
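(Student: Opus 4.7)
My plan is to treat part (ii) first as an immediate application of Lagrange's theorem to $(\mathbb{Z}/r\mathbb{Z})^{\ast}$: by definition $r\in R_i(a)$ requires $\gcd(r,a)=1$ and $a$ to have multiplicative order exactly $i$ modulo $r$, so $i$ divides the group order $r-1$ because $r>2$.

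For part (i) I plan to set $k=e(r,a)$ and $\ell=e(r,-a)$ and exploit the identity $(-a)^j=(-1)^j a^j$. Squaring gives $a^{2k}=(-a)^{2k}\equiv 1$ and $a^{2\ell}=(-a)^{2\ell}\equiv 1$, so $\ell\mid 2k$ and $k\mid 2\ell$; the two orders therefore differ by at most a factor of two. The prime $r=2$ is handled separately but trivially: if $a$ is even neither side contains $2$; if $a$ is odd then $e(2,\pm a)\leq 2$, so $2$ only matters when $i=1$ in the first half of (i), and there $2\in R_1(-a)\Leftrightarrow a\equiv 3\pmod 4\Leftrightarrow 2\in R_2(a)$. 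For $i\geq 3$ odd and for $i\equiv 0\pmod 4$ the prime $2$ appears on neither side. So for the rest of the argument I can assume $r$ is odd.

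If $i$ is odd and $r\in R_i(-a)$, then $(-a)^i\equiv 1$ rewrites as $a^i\equiv -1$, so $k$ divides $2i$ but not $i$. Because $i$ is odd, every divisor of $2i$ not dividing $i$ has the form $2d$ with $d\mid i$, so $k=2d$; then $(-a)^{2d}=a^{2d}\equiv 1$ forces $\ell=i\mid 2d$, hence $i\mid d$, $d=i$, and $k=2i$, i.e.\ $r\in R_{2i}(a)$. The reverse inclusion is symmetric: from $r\in R_{2i}(a)$ one gets $a^i\equiv -1$ and thus $(-a)^i\equiv 1$, so $\ell\mid i$, and a parity split on a hypothetical $\ell<i$ contradicts either $k=2i\mid\ell$ (if $\ell$ is even) or $k\mid 2\ell$ combined with $\ell\mid i$ (if $\ell$ is odd).

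Finally, for $i=4m$ the identity $(-a)^i=a^i$ gives $k\mid i\Leftrightarrow\ell\mid i$. Assuming $\ell=i$, a hypothetical proper divisor $k<i$ is ruled out by parity: even $k$ would give $(-a)^k=a^k\equiv 1$ and hence $\ell\mid k<i$, while odd $k$ would give $(-a)^{2k}\equiv 1$ and hence $4m\mid 2k$, i.e.\ $2m\mid k$, impossible for odd $k$ and $m\geq 1$. So $k=i$, and the reverse direction is symmetric. The only real obstacle throughout is the careful bookkeeping of these parity and divisibility constraints; there is no conceptually hard step.
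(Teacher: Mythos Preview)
Your argument is correct. For (ii) you invoke Lagrange's theorem in $(\mathbb{Z}/r\mathbb{Z})^\ast$ directly, which works uniformly for any sign of $a$ since the residue class of $a$ modulo the odd prime $r$ lies in that cyclic group regardless; for (i) you give a self-contained elementary order computation, carefully handling the special convention for $e(2,n)$. The paper proceeds differently: it imports (i) wholesale from \cite[Lemma~1.3]{15Vas}, and then for (ii) splits on the sign of $a$, treating $a>0$ by Fermat and reducing $a<0$ to the positive case via (i) (with a further subcase $i\equiv 2\pmod 4$ where one picks up an extra factor of $2$ from the parity of $r-1$). Your route is more economical---(ii) is a one-line consequence of Lagrange with no dependence on (i), and your proof of (i) makes the paper self-contained at the cost of a short divisibility case analysis. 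One small remark: in your reverse inclusion for odd $i$, the ``$\ell$ even'' branch is in fact vacuous (since $\ell\mid i$ forces $\ell$ odd), so only the odd-$\ell$ branch is needed; the argument there, giving $2i\mid 2\ell$ and hence $\ell=i$, is fine.
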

\begin{proof} The first assertion is a consequence of \cite[Lemma 1.3]{15Vas}. Now we prove $(ii)$.
If $a>0$ then by Fermat little theorem we know that $a^{r-1}-1$ is divisible by $r$.
Since $i=e(r,a)$, we conclude that $i$ divides $r-1$. Let $a<0$. If $i$ is odd or $i$ is divisible by 4 then $(i)$ implies that either $e(r,-a)=i$ or $e(r,-a)=2i$ and hence $r-1$ is divisible by $i$. It remains to consider the case $i=4k+2$ with an integer $k$. Then $(i)$ implies that $e(r,-a)=i/2$ and hence $r-1$ is divisible by $2k+1$.
However, we know that $r-1$ is even. Therefore, $4k+2$ divides $r-1$, as required.
\end{proof}

\begin{lemma}{\rm\cite[Lemma~1.1]{Vas05}}\label{3-coclique}
Consider a finite group $G$ with a normal series of subgroups $1\leq K\leq M\leq G$. Then three primes
$p$, $q$ and $r$ such that $p$ divides $|K|$, $q$ divides $|M/K|$, and $r$ divides $|G/M|$ 
cannot be pairwise nonadjacent in $\Gamma(G)$.
\end{lemma}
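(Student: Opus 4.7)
The plan is a contradiction argument. Assume $\{p,q,r\}$ is a coclique in $\Gamma(G)$; I will either exhibit an element of order $qr$ in a subquotient of $G$ (which contradicts $qr\notin\omega(G)$, since $\omega(G/N)\subseteq\omega(G)$ for every $N\trianglelefteq G$ via lifting and taking an appropriate power) or produce a Frobenius group whose kernel is forced to be simultaneously nilpotent and non-nilpotent.

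The first move is a Frattini reduction. Let $P$ be a Sylow $p$-subgroup of $K$; since $K\trianglelefteq G$, Frattini gives $G=K\cdot N_G(P)$, so in $H:=N_G(P)$ the normal series becomes $P\leq H\cap K\trianglelefteq H\cap M\trianglelefteq H$ with $(H\cap M)/(H\cap K)\cong M/K$ and $H/(H\cap M)\cong G/M$; the three factors still have orders divisible by $p$, $q$, $r$ respectively, and crucially $P$ is now normal in $H$. The nonadjacency hypotheses now translate into fixed-point-free actions: if a nontrivial $q$-element $g\in H$ centralised some nontrivial $x\in P$, then $xg$ would have order divisible by $pq\notin\omega(G)$. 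Hence every nontrivial $q$-element of $H$ acts fixed-point-freely on $P$, and by the same reasoning so does every nontrivial $r$-element.

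Next I arrange an $r$-element that normalises a Sylow $q$-subgroup in the quotient. Set $\bar H=H/P$ and pick a Sylow $q$-subgroup $\bar Q$ of the normal subgroup $\overline{H\cap M}=(H\cap M)/P$; it is nontrivial since $q\mid|M/K|$. A second Frattini argument inside $\bar H$ gives $\bar H=\overline{H\cap M}\cdot N_{\bar H}(\bar Q)$, so $[N_{\bar H}(\bar Q):N_{\overline{H\cap M}}(\bar Q)]=|G/M|$ is divisible by $r$. Cauchy followed by passage to an appropriate $r$-power produces an element $\bar h'\in N_{\bar H}(\bar Q)$ of order exactly $r$, which lifts to a nontrivial $r$-element $h'\in H$. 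Schur--Zassenhaus supplies a complement $Q$ to $P$ inside the preimage of $\bar Q$, yielding $PQ\rtimes\langle h'\rangle\leq H$.

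The main obstacle is the final extraction of the contradiction. For any $x=pq\in C_{PQ}(h')$ with $p\in P$ and $q\in Q$, reduction modulo $P$ shows $\bar q\in C_{\bar Q}(\bar h')$. If this centraliser is nontrivial, any nontrivial element in it together with $\bar h'$ is a commuting pair of coprime prime orders, producing an element of order $qr$ in $\bar H$ and hence in $G$, a contradiction. Otherwise $\bar q=1$ forces $q=1$, so $p\in C_P(h')=\{1\}$ by fixed-point-freeness, and $C_{PQ}(h')=1$. Then $PQ\rtimes\langle h'\rangle$ is a Frobenius group with kernel $PQ$; Thompson's theorem on Frobenius kernels forces $PQ$ to be nilpotent. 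But $PQ=P\rtimes Q$ is itself a nontrivial Frobenius group (both $P$ and $Q$ nontrivial, with $Q$ acting fixed-point-freely on $P$), hence non-nilpotent -- the contradiction that closes the argument.
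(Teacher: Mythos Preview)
The paper does not supply its own proof of this lemma; it is quoted verbatim from \cite{Vas05} as a known result. So there is nothing to compare against, and the question is simply whether your argument is sound. It is.

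Your Frattini reduction to $H=N_G(P)$ is correct: from $G=KH$ you get $H/(H\cap K)\cong G/K$, and under this isomorphism $M/K$ corresponds to $(H\cap M)/(H\cap K)$, so the three successive factors of the series $P\trianglelefteq H\cap K\trianglelefteq H\cap M\trianglelefteq H$ have orders divisible by $p$, $q$, $r$ respectively, with $P$ now normal in $H$. The fixed-point-free action of $q$- and $r$-elements on $P$ follows exactly as you say. The second Frattini argument in $\bar H=H/P$ is also fine; one small point you glide over is that a preimage $h_0$ of $\bar h'$ has order $r\cdot p^a$, so you should take $h'=h_0^{p^a}$ to get an honest $r$-element that still normalises the preimage $R=PQ$ of $\bar Q$ --- but this is routine. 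The dichotomy at the end is clean: either $C_{\bar Q}(\bar h')\neq 1$ gives $qr\in\omega(H/P)\subseteq\omega(G)$, or $C_R(h')=1$ makes $R\rtimes\langle h'\rangle$ Frobenius with kernel $R$, so $R$ is nilpotent by Thompson; but $R=P\rtimes Q$ with $Q$ acting fixed-point-freely on $P$ is itself a Frobenius group and hence non-nilpotent. This is essentially the classical argument behind the result in \cite{Vas05}.
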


\begin{lemma}\label{l:S-factor} For a finite group $G$ take a coclique $\rho$ in $\Gamma(G)$ with $|\rho|\geq3$.
Then the following claims hold:

$(i)$ there exists a nonabelian composition factor $S$ of $G$ and a normal subgroup $K$ of $G$ such that $S\leq\overline{G}=G/K\leq Aut(S)$ 
and $|\pi(S)\cap\rho|\geq2$. 

$(ii)$ If $\rho'$ is a coclique in $\Gamma(G)$ with $|\rho'|\geq3$ and $|\pi(S)\cap\rho'|\geq1$ then $|G|/|S|$ is divisible by at most one element of $\rho'$. In particular, $|\pi(S)\cap\rho|\geq|\rho'|-1$ and $S$ is a unique composition factor of $G$ with $|\pi(S)\cap\rho'|\geq2$. 

\end{lemma}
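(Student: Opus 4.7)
The strategy is to apply Lemma~\ref{3-coclique} to carefully chosen normal series obtained by refining a $G$-chief series. For part~(i), I would fix a $G$-chief series $1 = G_0 < G_1 < \cdots < G_n = G$ and, for each $p \in \rho$, let $j(p) = \min\{\,j : p \mid |G_j|\,\}$, so that $p$ divides $|G_{j(p)}/G_{j(p)-1}|$ while being coprime to $|G_{j(p)-1}|$. Were three pairwise nonadjacent primes $p_1, p_2, p_3 \in \rho$ to have strictly increasing $j$-values, the normal series $1 \leq G_{j(p_1)} \leq G_{j(p_2)} \leq G$ would immediately contradict Lemma~\ref{3-coclique}. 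Hence two primes $p, q \in \rho$ share the same index $j$, forcing the chief factor $G_j/G_{j-1}$ to have at least two distinct prime divisors; as a chief factor, it must be isomorphic to $S^k$ for a nonabelian simple $S$ with $\{p, q\} \subseteq \pi(S)$. If $k \geq 2$, then the element $(x, y, 1, \ldots, 1) \in S^k$ with $|x| = p$ and $|y| = q$ has order $pq$; lifting to $G_j$ and raising to the $|G_{j-1}|$-th power (coprime to $pq$) produces an element of $G$ of order $pq$, contradicting the coclique property. Thus $k = 1$, and taking $K$ to be the preimage in $G$ of $C_{G/G_{j-1}}(S)$ yields $K \cap G_j = G_{j-1}$ (using $Z(S) = 1$); the conjugation action embeds $G/K$ into $\Aut(S)$ with $S \leq G/K$, proving~(i).

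For part~(ii), suppose for contradiction that distinct primes $r_1, r_2 \in \rho'$ both divide $|G|/|S| = |K|\cdot[G/K:S]$; together with any third prime of $\rho'$ they form a pairwise nonadjacent triple. I would apply Lemma~\ref{3-coclique} to the normal series $1 \trianglelefteq K \trianglelefteq M \trianglelefteq G$, where $M \triangleleft G$ is the preimage of $S$, so $|M/K| = |S|$ and $|G/M|$ divides $|\Out(S)|$, using a case analysis according to the parts of this series in which the three chosen primes lie. The cases where $r_1$ and $r_2$ split across $\pi(K)$ and $\pi(G/M)$ (with a prime of $\pi(S) \cap \rho'$ in the middle) give a direct contradiction; the case where both $r_1, r_2$ lie in $\pi(K)$ is handled by refining the series through a $G$-chief series of $K$ and applying the same argument, provided $r_1$ and $r_2$ first appear in different chief factors of $K$. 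The symmetric case $r_1, r_2 \in \pi(G/M) \subseteq \pi(\Out(S))$ is reduced by Lemma~\ref{l:Vakula}, which places such odd primes either in $\pi(S)$ (reducing to an already handled configuration) or strictly below the top prime of $\pi(S)$, leaving them susceptible to the same refinement.

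The main obstacle is the remaining subcase: $r_1$ and $r_2$ first appear in the same chief factor $G_j/G_{j-1}$ of $K$, where no $G$-normal subgroup separates them. The lifting argument of part~(i) forces this chief factor to be a single simple nonabelian group $T$ with $\{r_1, r_2\} \subseteq \pi(T)$ and $r_1 r_2 \notin \omega(T)$. Applying the construction of part~(i) to the chief factor $T$ gives a second normal subgroup $K' \triangleleft G$ with $T \leq G/K' \leq \Aut(T)$. I would then compare $K$ and $K'$: if $KK' = G$, then $G/(K \cap K') \cong G/K \times G/K'$ contains a commuting pair of elements of orders $r \in \pi(S) \cap \rho'$ and $s \in \pi(T) \cap \rho'$ (chosen distinct), which lifts by coprimality to an element of $G$ of order $rs$, contradicting the coclique property; if $KK' \neq G$, then $KK'/K$ is a nontrivial proper normal subgroup of the almost simple $G/K$ and therefore contains the socle $S$ (and symmetrically $KK' \supseteq M'$, the preimage of $T$), and Lemma~\ref{3-coclique} applied to an enlarged normal series through $K$, $M'$, and $M$ again produces the required contradiction.

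Finally, the ``in particular'' clauses are immediate corollaries: any prime of $\pi(G)$ that does not divide $|G|/|S|$ satisfies $v_p(|G|) = v_p(|S|) \geq 1$ and thus lies in $\pi(S)$, giving $|\pi(S) \cap \rho'| \geq |\rho'| - 1$. Uniqueness of $S$ among composition factors with $|\pi(S') \cap \rho'| \geq 2$ follows because any such $S'$ would contribute two primes of $\rho'$ to $|G|/|S|$, directly contradicting the main conclusion of~(ii).
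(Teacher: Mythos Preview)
Your argument for~(i) is correct and essentially matches the paper's. For~(ii), your reduction to the ``main obstacle'' subcase (both $r_1,r_2$ first appear in a single nonabelian simple $G$-chief factor $T$ inside $K$) is also along the paper's lines, though two preliminary points need tightening: the appeal to Lemma~\ref{l:Vakula} for the case $r_1,r_2\in\pi(G/M)$ is a non-sequitur (the relevant fact is simply that $\Out(S)$ is solvable, so after refining, either $r_1,r_2$ share an abelian layer and are adjacent, or Lemma~\ref{3-coclique} applies), and you never verify that $t\in\pi(S)\cap\rho'$ can be chosen distinct from $r_1,r_2$ (the paper spends a short case analysis on exactly this).

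The genuine gap is your Case~B of the main obstacle. When $KK'\neq G$ you assert that Lemma~\ref{3-coclique} applies to ``an enlarged normal series through $K$, $M'$, and $M$'', but these three subgroups do not form a chain: $M'=G_jK'$ contains $K'$, and $K'\not\subseteq K$ (this itself needs an argument---if $K'\subseteq K$ then $G/K$ would be a quotient of $G/K'\leq\Aut(T)$, forcing the nonabelian $S$ into solvable $\Out(T)$). Even with a legitimate chain such as $1\leq K\cap K'\leq K\leq KK'\leq G$, both $r_1$ and $r_2$ sit in the same layer $K/(K\cap K')\cong KK'/K'\supseteq T$, so Lemma~\ref{3-coclique} cannot separate them.

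The paper closes this case differently and more directly: since $\widetilde G/(C_{\widetilde G}(T)\cdot T)$ embeds in $\Out(T)$ and is therefore solvable (Schreier), the nonabelian factor $S$ of $\widetilde G/T$ must already be a composition factor of $C_{\widetilde G}(T)$; hence some element of order $t$ in $C_{\widetilde G}(T)$ commutes with an element of order $r_1$ in $T$, giving $tr_1\in\omega(G)$. In fact your own setup is one line away from this: from $KK'/K\supseteq S$ you get that $S$ is a composition factor of $K'/(K\cap K')\cong KK'/K$, hence of $K'/G_{j-1}=C_{\widetilde G}(T)$, and then the commuting-element argument of your Case~A finishes Case~B as well---no further normal-series trick is needed.
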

\begin{proof} Consider a chief series $1=N_0\leq N_1\leq...\leq N_t=G$ for $G$. Lemma~\ref{3-coclique} implies that for some $i\geq1$ at least two elements of $\rho$ divide $|N_i:N_{i-1}|$. Put $\overline{G}=G/N_{i-1}$ and $\overline{N_i}=N_i/N_{i-1}$. Since $\overline{N_i}$ is a direct product of isomorphic simple groups and $t(\overline{N_i})\geq2$, it follows that $\overline{N_i}$ is isomorphic to a nonabelian simple group $S$. Since $C_{\overline{G}}(\overline{N_i})\triangleleft\overline{G}$ and $C_{\overline{G}}(\overline{N_i})\cap\overline{N_i}=1$, we may assume that there exists a normal subgroup $K$ of $G$ such that $S$ is normal in $G/K$ and $C_{G/K}(S)=1$. Then $G/K$ is isomorphic to a subgroup of $\Aut(S)$. Therefore, $S$ and $K$ are the required groups for assertion $(i)$.

Take a coclique $\rho'$ in $\Gamma(G)$ with $|\rho'|\geq3$ and $|\pi(S)\cap\rho'|\geq1$. Suppose that $|G|/|S|$ is divisible by two distinct primes lying in $\rho'$. 
We claim that there exist distinct primes $t,r,s\in\rho'$ such that $t\in\pi(S)$ and $r,s\in\pi(|G|/|S|)$.
We have $|\pi(S)\cap\rho'|\geq1$. Assume that $|\pi(S)\cap\rho'|\geq3$. Let $r$, $s$ be distinct primes from $\pi(|G|/|S|)\cap\rho'$ and $t\in\pi(S)\cap\rho'$ be a prime different from $r$ and $s$. 
If $|\pi(S)\cap\rho'|=2$ then there exist $r\in\rho'\setminus\pi(S)$ and $s\in\pi(|G|/|S|)$ such that $s\neq r$. Since both primes of $\pi(S)\cap\rho'$ are different from $r$, 
there exists $t\in\pi(S)\cap\rho'$ with $t\neq r$ and $t\neq s$. Finally, assume that $|\pi(S)\cap\rho'|=1$. 
Then take $t\in\pi(S)\cap\rho'$ and distinct primes $r$, $s$ in $\rho'\setminus\pi(S)$. In all cases, we find the required numbers. Since $\Out(S)$ is solvable, Lemma~\ref{3-coclique} implies that $|\overline{G}|/|S|$ is divisible by at most one number in $\{r, s\}$. If $|\overline{G}|/|S|$ is divisible by $r$ or $s$ then the other prime divides $|K|$, again we arrive at a contradiction with Lemma~\ref{3-coclique}. Therefore, $r,s\in\pi(K)$. 
Consider the socle series $1=T_0\triangleleft T_1...\triangleleft T_m=K$ for $K$, where $T_{j}/T_{j-1}=\operatorname{Soc}(K/T_j)$ for $j\geq1$. By Lemma~\ref{3-coclique}, the primes $r$ and $s$ divide $|T_{j}/T_{j-1}|$ for some $j$. Since $T_j/T_{j-1}$ is a direct product of simple groups, there exists a nonabelian simple subgroup $L$ in $T_{j}/T_{j-1}$ with $r,s\in\pi(L)$. Clearly,  $L$ is a characteristic subgroup in $T_{j}/T_{j-1}$, so $L$ is normal in $G/T_{j-1}$. Put $\widetilde{G}=G/T_{j-1}$. Then $N_{\widetilde{G}}(L)=\widetilde{G}$. Since $S$ is a composition factor of $G/K$, it follows that $S$ is a composition factor of $N_{\widetilde{G}}(L)/L$. The normal series $1\leq C_{\widetilde{G}}(L)\leq N_{\widetilde{G}}(L)=\widetilde{G}$ can be refined to a composition series. Since $N_{\widetilde{G}}(L)/C_{\widetilde{G}}(L)$ is isomorphic to a subgroup of $\Aut(L)$, we infer that the composition factors of $N_{\widetilde{G}}(L)/C_{\widetilde{G}}(L)$ are abelian, in particular, not isomorphic to $S$. By the Jordan-H\"{o}lder theorem, $S$ is a composition factor of $C_{\widetilde{G}}(L)$, whence $t$ and $r$ are adjacent in $\Gamma(G)$; a contradiction. 
So $|G|/|S|$ is divisible by at most one element of $\rho'$ and hence $S$ is a unique composition factor of $G$ with $|\pi(S)\cap\rho|\geq2$. The lemma is proved.
\end{proof}

\begin{lemma}\label{l:primes_in_K}
Consider a finite group $G$ and a normal subgroup $K$ of $G$. If $r$ divides the order of a nonabelian composition factor $S$ of $K$,
then $r$ is adjacent to every $s\in\pi(G/K)\setminus\pi(\Aut S)$ in $\Gamma(G)$.
\end{lemma}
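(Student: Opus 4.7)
The plan is to exhibit an element of $G$ of order divisible by $rs$, which will imply that $r$ and $s$ are adjacent in $\Gamma(G)$. First I would pick $x \in G$ of $s$-power order whose image in $G/K$ has order $s$: start with any preimage of an element of order $s$ in $G/K$ and replace it by its $s'$-part. Next, since $S$ is a composition factor of $K$, any chief series of $G$ refining $1 \leq K \leq G$ contains a chief factor $M/N$ with $N, M \triangleleft G$ and $N \leq M \leq K$ that is isomorphic to $S^k$ for some $k \geq 1$; I fix such a factor.

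The heart of the argument is to analyse the conjugation action of $x$ on $M/N$, which gives an $s$-element $\bar{x} \in \Aut(M/N) \cong \Aut(S) \wr S_k$. Because $s \notin \pi(\Aut(S))$, the base group $\Aut(S)^k$ of the wreath product is an $s'$-group, so $\bar{x}$ permutes the $k$ simple factors of $S^k$ with orbits of $s$-power length. For every such orbit $O$ of length $l$, the automorphism $\bar{x}^l$ stabilises each $S_i$ with $i \in O$ and acts on it as an $s$-element of $\Aut(S)$, hence trivially. A short direct calculation then shows that the fixed points of $\bar{x}$ inside $\prod_{i \in O} S_i$ form a diagonal copy of $S$. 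Thus $C_{M/N}(\bar{x})$ contains a subgroup isomorphic to $S$ and, since $r$ divides $|S|$, it contains an element $\bar{y}$ of order $r$.

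To finish, I would lift $\bar{y}$ to $y \in M \subseteq K$. In $G/N$ the cosets $xN$ and $yN$ commute, $xN$ has order divisible by $s$ (its further image in $G/K$ has order $s$), and $yN$ has order $r$. Because $r$ divides $|S|$ and hence $|\Aut(S)|$ while $s \nmid |\Aut(S)|$, the primes $r$ and $s$ are distinct, so $(xy)N$ has order divisible by $rs$. Consequently $xy \in G$ has order divisible by $rs$, giving $rs \in \omega(G)$.

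The main obstacle is the centralizer analysis in the middle paragraph: proving that any $s$-element of $\Aut(S^k)$ with $s \notin \pi(\Aut(S))$ is, up to the $s'$-base group, a pure permutation of the factors, and that on each of its orbits the centralizer in $S^k$ is a diagonal copy of $S$. Everything else---choosing $x$, locating the chief factor $M/N$, and combining commuting elements of coprime orders in $G/N$---is essentially organisational.
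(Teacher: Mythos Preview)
Your proof is correct and follows essentially the same approach as the paper's. Both arguments locate a $G$-normal section of $K$ that is a direct power of $S$, observe that an $s$-element of $G$ must permute the simple factors (since $s\notin\pi(\Aut S)$ forces trivial action on any single factor), and then exhibit a diagonal element of order $r$ centralized by the $s$-element. The only cosmetic differences are that the paper works with the socle series and an element of order exactly $s$ (so orbits have length $1$ or $s$), whereas you use a chief factor and an element of $s$-power order together with the wreath-product description of $\Aut(S^k)$; neither choice changes the substance of the argument.
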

\begin{proof} Assume the contrary and take $s\in\pi(G/K)\setminus\pi(\Aut S)$ with $rs\not\in\omega(G)$.
Consider the socle series $1=N_0<N_1...<N_k=K$ for $K$, so $N_i/N_{i-1}=\operatorname{Soc}(K/N_{i-1})$ for $i>0$.
We may assume that $S\triangleleft N_1$. Take an order-$s$-element $g$ of $N_G(N_1)$. Then $g$ acts by conjugation on the set $\{S^h~|~h\in G\}$. Then the length of the orbit of $g$ equals either 1 or $s$. If $g$ normalizes $S$ then $g$ centralizes $S$ because $s$ does not divide $|\Aut S|$; a contradiction with $rs\not\in\omega(G)$. So the length equals $s$. Then $g$ normalizes a group isomorphic to $S^s=S\times S...\times S$. If $h\in S$ is an element of order $r$ then $g$ centralizes $h\cdot h^g\cdot...\cdot h^{g^{s-1}}$ and hence $rs\in\omega(G)$; a contradiction.
\end{proof}

For two real numbers $a<b$, denote by $\pi(a,b)$ the number of primes $s$ satisfying $a<s\leq b$.

\begin{lemma}{\rm\cite[Corollary~3]{piFunc}}\label{l:pix} 
If $x\geq20\frac{1}{2}$ then $\pi(x,2x)>3x/(5\ln x)$ and if $x>1$ then $\pi(x,2x)<7x/(5\ln x)$.
\end{lemma}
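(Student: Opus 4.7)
The statement is cited from \cite{piFunc}; here is how I would prove it from first principles. Write $\pi(x,2x)=\pi(2x)-\pi(x)$ and apply an explicit Chebyshev-type estimate of Rosser--Schoenfeld/Dusart type: for every $y$ above an effective threshold $y_0$,
\[
\frac{y}{\ln y}\Bigl(1+\frac{c_1}{\ln y}\Bigr)<\pi(y)<\frac{y}{\ln y}\Bigl(1+\frac{c_2}{\ln y}\Bigr)
\]
with small explicit constants $c_1,c_2$. Substituting into $\pi(2x)-\pi(x)$ produces a one-variable inequality in $x$ whose leading behaviour is $(2\ln x/\ln(2x)-1)\cdot x/\ln x$. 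The coefficient $2\ln x/\ln(2x)$ is monotone increasing on $(1,\infty)$ and tends to $2$, so the target constants $3/5$ and $7/5$ lie strictly inside the admissible window, and a short calculus check shows that both the upper bound $\pi(x,2x)<7x/(5\ln x)$ and the lower bound $\pi(x,2x)>3x/(5\ln x)$ follow from the displayed estimate for all $x$ beyond some computable cutoff $x_1$.

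The main obstacle is the intermediate range $20\tfrac12\leq x\leq x_1$: here the asymptotic substitution is not sharp, and I would finish by direct computation. The function $x\mapsto \pi(2x)-\pi(x)$ is piecewise constant between consecutive elements of the set $\{p/2:p\text{ prime}\}\cup\{p:p\text{ prime}\}$, so it has only finitely many breakpoints up to $x_1$. On each subinterval the two endpoint inequalities can be tested against the monotone functions $3x/(5\ln x)$ and $7x/(5\ln x)$; this is a mechanical verification from a short table of primes. Combining the finite check with the asymptotic argument gives both bounds for $x\geq 20\tfrac12$; the upper bound in the range $1<x\leq 20\tfrac12$ is then handled by the same finite computation, since $\pi(x,2x)$ is very small there while $7x/(5\ln x)$ blows up as $x\to 1^+$.
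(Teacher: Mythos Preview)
The paper does not give its own proof of this lemma: it is simply quoted as Corollary~3 of Rosser--Schoenfeld~\cite{piFunc}, with no argument supplied. So there is nothing in the paper to compare your attempt against.

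That said, your sketch is the standard route and is essentially how Rosser and Schoenfeld themselves derive the corollary: they first establish sharp explicit bounds for $\pi(y)$ (actually for $\vartheta(y)$, but the translation is routine), deduce the inequalities for $\pi(2x)-\pi(x)$ above an effective threshold, and cover the remaining bounded range by direct inspection of a prime table. Two small remarks. First, with off-the-shelf bounds such as $\pi(y)>y/\ln y$ for $y\ge 17$ and $\pi(y)<1.25506\,y/\ln y$ for $y>1$, the cutoff $x_1$ below which you must check by hand comes out in the thousands rather than the dozens, so ``short table of primes'' is optimistic but still finite and mechanical. Second, your observation that $7x/(5\ln x)\to\infty$ as $x\to 1^{+}$ is correct and does make the upper bound trivial near $x=1$, but you should also note that $\pi(x,2x)$ can jump at each prime $p$ and each half-prime $p/2$, so the finite check must test both endpoints of every such subinterval against the monotone comparison functions; you said this, and it is the right bookkeeping. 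There is no genuine gap in your outline.
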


\begin{lemma}{\rm\cite[Proposition~5.4]{Dusart}}\label{l:Dusart}
For all $x\geq89693$, there exists a prime $p$ such that $x<p\leq x(1+\frac{1}{ln^3x})$.
\end{lemma}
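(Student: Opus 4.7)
The plan is to establish this estimate via explicit bounds on the Chebyshev function $\theta(x)=\sum_{p\leq x}\ln p$. The existence of a prime in $(x,x(1+1/\ln^3 x)]$ is equivalent to
\[\theta\bigl(x(1+1/\ln^3 x)\bigr)-\theta(x)>0,\]
so the task reduces to bounding this difference from below for all $x\geq 89693$.

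First, I would invoke explicit Rosser--Schoenfeld-type inequalities, in the sharpened form due to Dusart, of the shape $|\theta(x)-x|\leq \eta(x)\,x$, where $\eta(x)$ is an explicit decreasing function with $\eta(x)=O(\ln^{-k}x)$ for some $k\geq 4$ achievable for moderately large $x$. Such estimates follow from combining a wide explicit zero-free region for $\zeta(s)$ with a numerical verification of the Riemann hypothesis up to a large height $T$ and an explicit Perron-type summation formula. Applying the upper bound to $\theta(x)$ and the lower bound to $\theta\bigl(x(1+1/\ln^3 x)\bigr)$ then yields
\[\theta\bigl(x(1+1/\ln^3 x)\bigr)-\theta(x)\geq \frac{x}{\ln^3 x}-\eta\bigl(x(1+1/\ln^3 x)\bigr)\,x\bigl(1+1/\ln^3 x\bigr)-\eta(x)\,x,\]
and because $\eta(x)$ decays faster than $1/\ln^3 x$, the right-hand side is positive for all $x$ beyond some explicit threshold $x_0$.

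Second, for the residual window $89693\leq x\leq x_0$, I would carry out a direct finite verification against a table of primes: writing the primes $p_1<p_2<\ldots$ in this interval, it is enough to check that every gap satisfies $p_{n+1}-p_n< p_n/\ln^3 p_n$. The cutoff $89693$ is chosen precisely so that this numerical sweep is feasible while the analytic bound takes over beyond $x_0$.

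The hard part is calibrating the explicit constants in $\eta(x)$ sharply enough that the analytic and computational ranges dovetail exactly at $x=89693$; a weaker $\eta$ would force either a larger numerical range or a larger starting point. In the present paper none of this is redone: the statement is simply imported from \cite{Dusart}, where both the explicit Chebyshev bounds and the accompanying numerical verification are worked out in full.
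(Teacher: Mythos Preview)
Your proposal is correct: the paper gives no proof of this lemma at all, simply citing it as \cite[Proposition~5.4]{Dusart}, and you explicitly recognize this in your final paragraph. Your sketch of Dusart's underlying argument (explicit bounds on $\theta(x)$ from zero-free region information plus a finite numerical sweep over prime gaps below the analytic threshold) is an accurate summary of how such results are obtained, but none of it appears in the present paper.
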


\begin{lemma}\label{l:t(S)_restriction}
For every integer $n\geq167$, we have $t(S_n)\geq t(A_n)\geq 16$.
\end{lemma}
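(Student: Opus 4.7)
The plan is to exhibit a coclique of size at least $16$ in $\Gamma(S_n)$. Since $\omega(A_n)\subseteq\omega(S_n)$ implies that every edge of $\Gamma(A_n)$ is an edge of $\Gamma(S_n)$, any coclique in $\Gamma(S_n)$ remains a coclique in $\Gamma(A_n)$, so a single construction will yield $t(S_n)\geq t(A_n)\geq 16$ in one stroke.

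Define $P_n=\{p\text{ prime}:\, n/2<p\leq n\}$. First I will verify that $P_n$ is a coclique in $\Gamma(S_n)$: any two distinct $p,q\in P_n$ are odd primes (both exceed $83$) satisfying $p+q>n$, so no element of $S_n$ can accommodate two disjoint cycles of lengths $p$ and $q$; hence $pq\notin\omega(S_n)$ and $\{p,q\}$ is a non-edge of $\Gamma(S_n)$. This reduces the lemma to the purely number-theoretic estimate $\pi(n/2,n)\geq 16$ for every integer $n\geq 167$.

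I plan to handle the estimate in two regimes. For $n$ above a small threshold (around $n\geq 240$ is enough), Lemma~\ref{l:pix} applied with $x=n/2$ gives $\pi(n/2,n)>3n/(10\ln(n/2))$, and a short calculus check shows that this lower bound already meets or exceeds $15$ on the range, forcing $\pi(n/2,n)\geq 16$ by integrality. For the remaining finite window $167\leq n<240$ I would verify the inequality by hand: at $n=167$ the set $P_n$ consists of exactly the sixteen primes $89,97,101,103,107,109,113,127,131,137,139,149,151,157,163,167$, and $|P_n|$ is a step function that jumps by $+1$ whenever $n$ itself is prime and by $-1$ whenever $n=2p$ for a prime $p$ that lay in the previous interval. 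Running this bookkeeping through the primes in $(167,240]$ against the doublings $2\cdot 89,\ldots,2\cdot 113$ rapidly confirms that $|P_n|$ never dips below $16$ on that window.

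The main obstacle is that Lemma~\ref{l:pix} is not quite sharp enough to cover $n=167$ on its own --- there it only yields $\pi(n/2,n)>11$ --- so a small amount of finite verification for the low end of the range is unavoidable. That verification is routine, amounting to a discrete monotonicity check using the known primes up to about $250$.
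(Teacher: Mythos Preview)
Your plan is correct and matches the paper's proof almost exactly: exhibit the primes in $(n/2,n]$ as a coclique in both $\Gamma(S_n)$ and $\Gamma(A_n)$, apply Lemma~\ref{l:pix} above a threshold (the paper uses $n\geq 400$, i.e.\ $x\geq 200$, rather than your $n\approx 240$), and verify the remaining finite window from a prime table. One small quibble: the subgraph observation you make actually yields $t(A_n)\geq t(S_n)$, not the chain as written in the lemma; the paper's own proof glosses over this the same way, and in any case only the bound $\geq 16$ for each of $t(A_n)$ and $t(S_n)$ is ever used.
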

\begin{proof} Put $x=\lfloor n/2\rfloor$. If $r$ and $s$ are distinct primes with $r\geq x$ and $s\geq x$ then
$r+s>n$. Therefore, $t(S_n)\geq t(A_n)\geq~\pi(x,2x)$. Suppose that $n\geq400$. 
Then $x\geq200$. Lemma~\ref{l:pix} implies that $t(A_n)>16$. If $167\leq p<400$ then the inequality $t(A_n)\geq 16$ can be verified using a table of primes, see \cite{PrimeTable} for instance.
\end{proof}

\begin{lemma}\label{l:r in S} Suppose that $n\geq17$. Take the largest prime $p$ less than or equal to $n$ and the largest prime $s$ less than $p$. Then there exists a prime $r\leq s/2$ such that $p+r>n$.
\end{lemma}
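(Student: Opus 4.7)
We seek a prime $r$ satisfying $n-p<r\leq s/2$. Since $n\geq17$, we have $p\geq17$ and hence the second largest prime $s$ below $p$ satisfies $s\geq13$, so in particular $s/2\geq 6$. If $n-p\leq 4$, the smallest prime strictly greater than $n-p$ lies in $\{2,3,5\}$, and already satisfies $r\leq 5\leq s/2$, which finishes this case.

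Assume now $n-p\geq 5$. The first prime gap of size at least $6$ begins at $p=23$, so this forces $n\geq 28$. By Bertrand's postulate applied to the positive integer $n-p$ there exists a prime $r$ with $n-p<r<2(n-p)$, and it then suffices to prove $s\geq 4(n-p)$ in order to conclude $r\leq s/2$. For $n\geq 31$, Nagura's refinement of Bertrand's postulate (a prime exists in $(x,6x/5]$ whenever $x\geq 25$), applied both to $n$ and to $p$, yields $p>5n/6$ and $s>5p/6$; combined with $n-p<n/6<p/5$, this gives $s>5p/6>4p/5>4(n-p)$ as required. The only remaining value is $n=28$ (with $p=23$, $s=19$): take $r=7$ directly, and indeed $5<7\leq 19/2$.

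The main technical step is the inequality $s\geq 4(n-p)$ when $n-p\geq 5$; once a Nagura-style bound is in hand the rest is elementary arithmetic. If one wishes to avoid citing Nagura, two applications of Lemma~\ref{l:Dusart} settle the case $n\geq 89693$ with room to spare (since then $n-p<n/\ln^3 n$ and $p-s<p/\ln^3 p$ make both gaps negligible fractions of $n$), and the intermediate range $28\leq n\leq 89692$ reduces to a finite verification using a table of primes, so the proof relies only on tools already present in the paper.
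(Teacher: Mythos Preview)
Your argument is correct and is a genuinely different (and tidier) route than the paper's. The paper fixes $r$ near the \emph{upper} constraint: it takes $s'=\lfloor s/2\rfloor$, invokes the bound ``$(5m/6,m)$ contains a prime for $m\geq30$'' to produce a prime $r\in(5s'/6,s')$, and then pushes through an explicit fractional estimate to check $p+r>n$. This needs $s'\geq30$, which forces the main argument to start only at $n\geq87$, leaving all of $17\leq n\leq87$ to direct verification. You instead fix $r$ near the \emph{lower} constraint $n-p$ via Bertrand, and then use two Nagura-type bounds to show $s>4(n-p)$, so $r<2(n-p)<s/2$. The payoff is that your asymptotic argument kicks in already at $n\geq31$, and the only hand check in the $n-p\geq5$ regime is $n=28$; the $n-p\leq4$ cases are disposed of uniformly with $r\in\{2,3,5\}$.

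One small wording point: when you say Nagura is ``applied to $p$'' to obtain $s>5p/6$, what you are really using is Nagura at $x=s$ (valid since $s\geq29$ once $p\geq31$), which gives $p<6s/5$ and hence $s>5p/6$. Applying Nagura with $x=5p/6$ only yields a prime in $(5p/6,p]$, which could be $p$ itself. This does not affect the validity of your proof, but the sentence would read more precisely if you phrased it as bounding the gaps $n-p$ and $p-s$ via Nagura. Your fallback paragraph (Lemma~\ref{l:Dusart} for large $n$ plus a finite table check) is also sound, though the Nagura version is clearly preferable.
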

\begin{proof} Lemma~1.9 of \cite{15Vas} shows that if $m\geq30$ then the interval $(5m/6,m)$ contains a prime. Assume that $n\geq87$. Consider the primes $p$ and $s$ from the hypothesis.
Observe that $p>5n/6\geq72$, hence $s>5p/6\geq60$. Put $s'=\lfloor s/2\rfloor$ and observe that there exists a prime $r$ in $(5s'/6,s')$. We claim that $r$ is the required prime. Since $s'\geq(s-1)/2$ and $s\geq25n/36$, we infer that $s'\geq(25n/36-1)/2=25n/72-1/2$. Therefore, $r>(5/6)(25n/72-1/2)=125n/432-5/12$ and hence $p+r>5n/6+125n/432-5/12=485n/432-5/12$. Since $n\geq87$, it follows that $485n/432-5/12>n+1-5/12>n$. Thus, $p+r>n$. 
If $17\leq n\leq 87$ then the existence of $r$ can be verified directly.
\end{proof}

\begin{lemma}\label{l:S is alt} Consider a finite group $G$ satisfying $\Gamma(G)=\Gamma(A_n)$ or $\Gamma(G)=\Gamma(S_n)$, where $n\geq17$.
Suppose that $A_t\leq G/K\leq S_t$ for a normal subgroup $K$ of $G$ and an integer $t$ such that $\pi(A_t)$ contains at least two primes greater than $n/2$. Then $\pi(A_t)=\pi(A_n)$
and $|K|$ is coprime to $p$ and divisible by at most one prime greater than $n/2$.
\end{lemma}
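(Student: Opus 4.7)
The plan is to exploit the coclique $P=\{q\in\pi(A_n):q>n/2\}$, which is a coclique of size at least~$3$ in $\Gamma(A_n)=\Gamma(G)$ for $n\geq 17$: any two primes of $P$ sum to more than $n$, and the size bound is a direct check. First I apply Lemma~\ref{l:S-factor} with $\rho=\rho'=P$. The hypothesis $|\pi(A_t)\cap P|\geq 2$ together with the fact that $A_t$ is a composition factor of $G$ forces, via the uniqueness clause in part~(ii), the distinguished composition factor there to be isomorphic to $A_t$. Part~(ii) then yields that $|G|/|A_t|$ is divisible by at most one prime of $P$. Since $|(G/K):A_t|$ divides~$2$ and every element of $P$ is odd (as $n\geq 5$), the primes of $P$ dividing $|G|/|A_t|$ coincide with those dividing $|K|$; this establishes the third conclusion of the lemma.

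The remaining two conclusions reduce to proving $p\nmid|K|$: granted this, the odd prime $p\in\pi(G)$ lies in $\pi(G/K)$ and hence in $\pi(A_t)$, giving $t\geq p$ and $\pi(A_t)\supseteq\pi(A_p)=\pi(A_n)$, with the reverse inclusion automatic from $\pi(A_t)\subseteq\pi(G)=\pi(A_n)$. I argue $p\nmid|K|$ by contradiction. Assume $p\mid|K|$; then $p$ is the unique prime of $P\cap\pi(K)$, so $P\setminus\{p\}\subseteq\pi(A_t)$ and in particular $t\geq p_2$, where $p_2$ denotes the second-largest prime of $\pi(A_n)$. Pick a chief factor $V\subseteq K$ of $G$ with $p\mid|V|$.

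If $V$ is a direct product of copies of a non-abelian simple group $S'$, then $p\in\pi(S')$ and $S'$ is a composition factor of $G$. Observe $S'\not\cong A_t$, for otherwise $p_2\in\pi(A_t)=\pi(S')\subseteq\pi(K)$ would yield a second prime of $P$ dividing $|K|$. For any prime $q\in\pi(A_t)\cap P\setminus\{p\}$ (and at least two such exist), the primes $p$ and $q$ are non-adjacent in $\Gamma(G)$, so Lemma~\ref{l:primes_in_K} forces $q\in\pi(\Aut S')$. Since $q$ is odd and $q>n/2\geq(\max\pi(S'))/2$, Lemma~\ref{l:Vakula} gives $q\in\pi(S')$. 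Taking two such $q$ produces $|\pi(S')\cap P|\geq 2$, which together with $S'\not\cong A_t$ contradicts the uniqueness clause of Lemma~\ref{l:S-factor}(ii).

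The remaining case is $V$ elementary abelian of characteristic $p$. A lifting argument shows $A_t$ embeds in $GL(V)$: otherwise the image of $A_t$ in $G/(KC_G(V))$ is trivial, so for $q\in\pi(A_t)\cap P\setminus\{p\}$ (which satisfies $q\nmid|K|$) a $q$-element of the preimage of $A_t$ in $G$ centralises $V$ and, multiplied by an order-$p$ element of $V$, produces an element of order $pq$ contradicting $p+q>n$. With $A_t\hookrightarrow GL(V)$ established, the same multiplication trick shows that every $q$-element of $A_t$, for every prime $q\in\pi(A_t)$ with $p+q>n$, acts on $V$ without eigenvalue~$1$. The main obstacle is to convert these fixed-point-free constraints into a contradiction with the faithful action. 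When $n-p\leq 1$ the prime $q=2$ is admissible: every involution of $A_t$ then has all eigenvalues equal to~$-1$ and acts as $-I$, contradicting the faithfulness of $A_t\hookrightarrow GL(V)$ for $t\geq 5$. For general $n$ one uses Lemma~\ref{l:r in S} to obtain a small prime $r\leq p_2/2$ with $p+r>n$, combines the fixed-point-free constraints at $r$ and at $p_2$, and after the reduction of Lemma~\ref{l:semidir} applies a Hall--Higman-type analysis (Lemma~\ref{l:hallhigman}) to the subgroup generated by $V$ and suitable prime-order elements of $A_t$, ruling out the exceptional behaviours and deriving the required contradiction.
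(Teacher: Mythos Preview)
Your treatment through the non-abelian chief-factor case is correct, but the abelian case is not proved: the closing sentence (``after the reduction of Lemma~\ref{l:semidir} applies a Hall--Higman-type analysis (Lemma~\ref{l:hallhigman}) \ldots\ ruling out the exceptional behaviours and deriving the required contradiction'') is a plan, not an argument. You do not say which semidirect product $T\rtimes C$ is fed into Lemma~\ref{l:hallhigman}, why the three exceptional clauses there all fail, or how the minimal-polynomial conclusion yields the contradiction. Your embedding $A_t\hookrightarrow GL(V)$ is also not established: from $A_t\leq G/(KC_G(V))$ you only get that $A_t$ is a composition factor of $G/C_G(V)$, not a subgroup.

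The paper avoids the chief-factor case split entirely. Assuming $p\mid|K|$, it takes a full Sylow $p$-subgroup $P$ of $K$ and applies the Frattini argument: $N_G(P)/N_K(P)\cong G/K$, so $A_t$ is a section of $N_G(P)$. Then Lemma~\ref{l:r in S} produces a prime $r\leq s/2\leq t/2$ with $p+r>n$. Since $2r\leq t$, the group $A_t$ contains an elementary abelian subgroup of order $r^2$; hence the Sylow $r$-subgroups of $N_G(P)$ are noncyclic. But an elementary abelian $r$-group $E$ of order $r^2$ normalising a nontrivial $p$-group $P$ cannot have every nonidentity element acting fixed-point-freely (because $P=\langle C_P(E_0):E_0<E\ \text{maximal}\rangle$), so $pr\in\omega(N_G(P))\subseteq\omega(G)$, contradicting $p+r>n$. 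That is the entire argument: no $GL(V)$ embedding, no abelian/non-abelian split, and no appeal to Lemma~\ref{l:hallhigman}. Incidentally, the same elementary-abelian-$r^2$ observation would also close the gap in your abelian case, replacing the unspecified Hall--Higman step.
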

\begin{proof} Put $\Omega=\{p\in\mathbb{P}~|~n/2<p\leq n\}$. Applying Lemma~\ref{l:pix}, we see that $|\Omega|\geq3$. Clearly, the set $\Omega$ is a coclique in $\Gamma(G)$. 
Denote the largest prime less than or equal to $n$ by $p$, and the largest prime less than $p$ by $s$.
Lemma~\ref{l:S-factor} yields $t\geq s$ and $|K|$ is divisible by at most one prime greater than $n/2$. It suffices to prove that $t\geq p$ and $p\not\in\pi(K)$.
Assume on the contrary that $t<p$. Then $p\in\pi(K)$. Take a Sylow $p$-subgroup $P$ of $K$.
The Frattini argument implies that $N_G(P)/N_K(P)\simeq G/K$. By Lemma~\ref{l:r in S}, there exists a prime $r\in\pi(A_t)$ such that $r\leq s/2$ and $p+r>n$. Since $r\leq t/2$, we conclude that $A_t$ contains an elementary abelian subgroup of order $r^2$, in particular, the Sylow $r$-subgroups of $A_t$ are noncyclic. Therefore, the Sylow $r$-subgroups of $N_G(P)$ are also noncyclic, whence $pr\in\omega(N_G(P))\subseteq\omega(G)$. Since $p+r>n$, we arrive at a contradiction.
This reasoning also shows that $p\not\in\pi(K)$.
\end{proof}

\section{Proof of the Theorem: the case $n\geq1000$}
Suppose that $G$ is a finite group such that $\Gamma(G)=\Gamma(L)$, where $L\simeq A_n$ or $L\simeq S_n$ with $n\geq1000$. Put $\Omega_L=\{p\in\mathbb{P}~|~n/2\leq p\leq n\}$. By Lemma~\ref{l:S-factor}, there exists a normal subgroup $K$ of $G$ such that $S\leq\overline{G}=G/K\leq Aut(S)$ for a nonabelian simple group $S$ and $|\pi(S)\cap\Omega_L|\geq|\Omega_L|-1$. 
Then $\Omega_S=\pi(S)\cap\Omega$ is a coclique in $\Gamma(S)$.
Lemma~\ref{l:pix} shows that $t(L)\geq|\Omega_L|\geq\pi(500,1000)>48$.
Therefore, $|\Omega_S|\geq|\Omega_L|-1\geq48$. According to Tables~1,~4 of \cite{11VasVd.t}, we have $t(S)\leq12$ when $S$ is a sporadic simple group or a simple exceptional group of Lie type. Therefore, $S$ is an alternating group or a simple classical group. 
If $S$ is an alternating group, then the theorem holds by Lemma~\ref{l:S is alt}. It suffices to prove that $S$ cannot be isomorphic to a simple classical group. 

\begin{pr} $S$ is not a simple classical group. 
\end{pr}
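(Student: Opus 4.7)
The plan is a proof by contradiction: assume $S$ is a simple classical group over $\mathbb{F}_u$ with $u=v^f$ and $\prk(S)=m$, and derive a contradiction from the coclique $\Omega_S \subseteq \pi(S)$ of size at least $47$ consisting of primes exceeding $n/2$.

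First I would force $m$ to be large. In any classical group, two primes of $\pi(S)\setminus\{v\}$ sharing the same $e$-value with respect to $u$ lie in a common cyclic torus of $S$ and are therefore adjacent in $\Gamma(S)$. Consequently the primes of $\Omega_S$ have pairwise distinct $e$-values, and by Lemma~\ref{l:adjanisotrop} these values are contained in $\{1,\ldots,2m\}$. Thus $m \geq 24$, so $\prk(S)>4$ and $t(S)\geq 15$, putting us inside the scope of Lemmas~\ref{l:r1r2r3r4}, \ref{l:adjanisotrop}, and \ref{l:good}.

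Second, I would show that the characteristic $v$ is adjacent in $\Gamma(S)$ to nearly all of $\Omega_S$. By Lemma~\ref{l:adjanisotrop}, at most one prime of $\Omega_S$ lies in the ``anisotropic'' set $R_j(u)$, since any two anisotropic primes are adjacent. Dropping this prime together with the at most four primes of $\Omega_S$ that divide $v(u^2-1)$, each remaining $s \in \Omega_S$ satisfies the hypotheses of Lemma~\ref{l:good}, which produces $H_s = T\rtimes\langle g\rangle \leq S$ with $T$ a nontrivial $v$-group, $|g|=s$, and at least one conclusion of Lemma~\ref{l:hallhigman} failing for $H_s$. In each failure mode an elementary construction inside $H_s$ yields an element of order $vs$, so $vs \in \omega(S) \subseteq \omega(G)$, and $v$ is adjacent in $\Gamma(G)$ to at least $42$ primes of $\Omega_S$.

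Third, since $\Gamma(G)=\Gamma(L)$, these $vs$-adjacencies in $\Gamma(L)$ force $v+s \leq n$ for every such $s > n/2$. Applying Lemma~\ref{l:Dusart}, the largest prime in $\Omega_L$ lies within $n/\ln^3 n$ of $n$, so $v \leq n/\ln^3 n$, and in particular $v \notin \Omega_L$. To close the argument I would combine this bound on $v$ with the lower bound $|S| \geq \prod_{s\in\Omega_S} s > (n/2)^{48}$ and the upper bound $|S|$ as a polynomial of degree $O(m^2)$ in $u$, together with the restriction $e(s,u)\leq 2m$ and the consequences of Lemma~\ref{l:R(-u)}(ii): the resulting inequalities become mutually inconsistent once the parabolic structure is analysed for each of the classical families $L_m$, $U_m$, $S_{2m}$, $O^{\pm}_{2m}$, $O_{2m+1}$, and the residual contradictions on at most a bounded range are ruled out via the witness primes $r_1,\ldots,r_4$ of Lemma~\ref{l:r1r2r3r4}.

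The main obstacle is the construction of an order-$vs$ element from the failure of the Hall--Higman conditions inside $H_s$: each of the three failure modes (nontrivial $C_T(g)$, abelian $T$, or the exceptional Fermat pair with $v=2$ and $s$ a Fermat prime) requires its own elementary argument. The Fermat case is the subtlest because conclusion (iii) of Lemma~\ref{l:hallhigman} is then \emph{satisfied} rather than violated; however the constraint $s>n/2 \geq 500$ leaves only the value $s=65537$, bounding $n<131074$, and this residual range must be handled separately through the finer coclique information provided by Lemma~\ref{l:r1r2r3r4}.
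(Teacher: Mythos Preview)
Your second step contains a genuine gap. Lemma~\ref{l:good} asserts that one of the three conclusions (i)--(iii) of Lemma~\ref{l:hallhigman} \emph{fails} for $H_s=T\rtimes\langle g\rangle$; you then claim that each failure mode yields an element of order $vs$ inside $H_s$. This is false. Conclusion (i) reads $C_T(g)\neq 1$, so its failure means $C_T(g)=1$, and in a semidirect product $T\rtimes\langle g\rangle$ with $T$ a $v$-group and $|g|=s$ one has $vs\in\omega(H_s)$ \emph{if and only if} $C_T(g)\neq 1$. Hence precisely when (i) fails, $H_s$ contains no element of order $vs$. (Your parenthetical list of ``failure modes'' already reverses the sense of (i), writing ``nontrivial $C_T(g)$'' where ``trivial'' is meant.) The purpose of Lemmas~\ref{l:hallhigman} and~\ref{l:good} is the contrapositive: once such an $H$ acts faithfully on an \emph{external} module $V$ in characteristic $t\neq v$, the failure of one of (i)--(iii) forces the minimal polynomial of $g$ on $V$ to be $x^s-1$, whence $g$ fixes a nonzero vector and $st\in\omega(V\rtimes\langle g\rangle)$. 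There is no such $V$ in your argument.

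The paper exploits this in the opposite direction from yours. Rather than showing that $v$ is adjacent to the large primes, it lifts $H\leq S$ to a subgroup $H_1\leq G$ and lets $H_1$ act on an elementary abelian $r$-section of $K$; this forces adjacencies between large primes of $\pi(S)$ and any candidate $r\in\pi(K)$, and is used to prove that primitive prime divisors $r\in R_i(\varepsilon u)$ with $i\geq\max(7,(m-5)/3)$ cannot lie in $\pi(K)$ (nor in $\pi(\overline G/S)$, by a separate short argument). Once this is known, the four primes $r_1,r_2,r_3,r_4$ of Lemma~\ref{l:r1r2r3r4} can be chosen coprime to $|K|\cdot|\overline G/S|$, so their adjacency pattern in $\Gamma(S)$ transfers verbatim to $\Gamma(L)$, giving $r_1+r_2\leq n$, $r_3+r_4\leq n$, $r_1+r_4>n$, $r_2+r_3>n$ and hence simultaneously $r_4>r_2$ and $r_4<r_2$. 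Your steps three and four gesture at a size comparison but never reach a concrete contradiction; the four-prime inequality is what actually closes the argument.
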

\begin{proof}
Assume on the contrary that $S$ is a simple classical group of Lie type over a field of order $u$. Let $m=\prk(S)$ and $u=v^k$, where $v$ is a prime and $k$ is a positive integer. Put $\varepsilon=-$ if $S$ is a unitary group and $\varepsilon=+$ otherwise.
\begin{lemma}\label{l:r in OutS} 
If $i\geq\max((m-5)/3,7)$ and $R_i(\varepsilon{u})\subset\pi(S)$ then there exists $r\in R_i(\varepsilon{u})$ such that $r$ does not divide $|\Out{S}|$.
\end{lemma}
\begin{proof} Since $i\geq7$, we have $R_i(\varepsilon{u})\neq\varnothing$. Let $\varepsilon=-$. Lemma~\ref{l:R(-u)} yields $R_i(-u)=R_{i'}(u)$, where $i'\in\{i/2,i,2i\}$. If $\varepsilon=+$ we put $i'=i$. Since $i\geq7$, we obtain $i'\geq4$. If $k\neq1$ then the claim is clear. If $k\geq2$ then $i'k\geq8$ and there exists $r\in R_{i'k}(v)$. Therefore, $r$ divides $u^{i'}-1$ and does not divide $u^j-1$ for $j<i'$. Hence, $r\in R_{i'}(u)$. By Lemma~\ref{l:R(-u)}, 
$r-1$ is divisible by $i'k$. In particular, $r>3$ and $(r,k)=1$. Whence $(r, |\Out{S}|)=1$ and so $r$ is the required prime.
\end{proof}

\begin{lemma}\label{l:r in K}
Let $\{r,p_1,p_2\}$ be a coclique in $\Gamma(G)$ such that $p_1$ and $p_2$ are elements of $\pi(S)$ which are not Fermat primes and $(p_1p_2, v(u^2-1))=1$. Then $|K|$ is not divisible by $r$.
\end{lemma}
\begin{proof} 
Assume on the contrary that $r$ divides $|K|$. Lemma~\ref{l:S-factor} yields $p_1,p_2\in\pi(S)$. Lemma~\ref{l:adjanisotrop} implies that at least one number of $p_1$ and $p_2$ divides the order of some parabolic subgroup $P$ of $S$. Let $p_1$ be this number.
To get a contradiction, we verify that $p_1r\in\omega(G)$. Take a Sylow $r$-subgroup $R$ of $K$. Applying Frattini's argument, we may assume that $R$ is a normal elementary abelian subgroup in $G$. Then $C_G(R)$ is a normal subgroup of $G$ and, since $S$ is simple and $rp_1\not\in\omega(G)$, we infer that $C_G(R)\leq K$. By the Schur--Zassenhaus theorem, $C_G(R)$ is a direct product of $R$ and some $r'$-subgroup. Therefore we may assume that $C_G(R)\leq R$. By Lemma~\ref{l:good}, there exists a subgroup $H=T\rtimes C$ of $S$, where $T$ is a $v$-group and $C$ is a cyclic group of order $p_1$ such that $[T,C]\neq1$. Denote by $G_1$ and $K_1$ the preimages of $H$ and $T$ in $G$.  
Now we prove that there exists a group with the properties of $H$ in $G_1$. Since $p_1$ does not divide the order of $K$, there exists a subgroup $C_1$ of order $p_1$ in $G_1$ whose image in $G/K$ is $C$. Then $G_1=K_1\rtimes C_1$.
Denote by $T_1$ a Sylow $v$-subgroup in $K_1$ whose image in $G_1/K$ is $T$. 
Frattini's argument implies that $N_{G_1}(T_1)K_1=G_1$, and so $N_{G_1}(T_1)$ includes a subgroup $C_2$ of order $p_1$. 
We claim that $H_1=T_1\rtimes C_2$ is the required group. Suppose that $[T_1,C_2]=1$. Then the same must hold for the images of $T_1$ and $C_2$ in $G_1/K$, hence $[T,C]=1$; a contradiction. 
Since $C_G(R)\leq R$, it follows that $H_1$ acts faithfully on $R$ by conjugation.
Since $p_1$ is not a Fermat prime, the group $H_1$ does not satisfy to $(iii)$ of Lemma~\ref{l:hallhigman}. Hence, an element of order $p_1$ in $H_1$ centralizes
some non-identity element of $R$, so $p_1r\in\omega(G)$; a contradiction. 
\end{proof}

\begin{lemma}\label{l:ri in K} If $i\geq\max((m-5)/3,7)$ and $r\in R_i(\varepsilon{u})$ then $r$ does not divide $|K|$.

\end{lemma} 
\begin{proof}
Assume on the contrary that $r$ divides $|K|$. Denote the $j$-th prime number by $p_j$ and the largest prime less than or equal to $n$ by $p_y$. Firstly, we suppose that $r<p_{y-2}$
and $r$ is nonadjacent to $p_{y-2}$ in $\Gamma(G)$. Then $\{r, p_{y-2}, p_{y-1}, p_y\}$ is a coclique in $\Gamma(G)$.  Lemma~\ref{l:S-factor} shows that $p_{y-2}$, $p_{y-1}$, and $p_y$ divide the order of $S$
and are coprime to $|G|/|S|$. Since $n/2<p_{y-2}<p_{y-1}<p_{y}\leq n$, at most one number among $p_{y-2}$, $p_{y-1}$, and $p_y$ is a Fermat prime. Observe that each of these three primes does not divide $v(u^2-1)$. Indeed, if $p\in\{p_{y-2},p_{y-1},p_y\}$ then $t(p,S)\geq |\Omega_S|\geq48$; however, $t(v,S)\leq4$ according to 
\cite[Tables 4, 6]{05VasVd.t}. Moreover, $t(r_1(u),S),t(r_2(u),S)\leq4$ by the adjacency criteria for the prime graphs of simple groups of Lie type \cite{05VasVd.t}.
Considering two primes from $\{p_{y-2}, p_{y-1}, p_y\}$ that are not Fermat primes, we arrive at a contradiction by Lemma~\ref{l:r in K}.
Thus, either $r\in\{p_{y-2}, p_{y-1}, p_y\}$ or $r$ is adjacent to $p_{y-2}$. In the first case, replacing $r$ by $p_{y-3}$ in the set $\{p_{y-2}, p_{y-1}, p_y\}$, we arrive at a contradiction as above, so $r+p_{y-2}\leq n$.

According to \cite[Table~1]{15Vas}, we have $(3m+5)/4\geq t(S)$. Since $t(G)\geq\pi(n/2,n)$, Lemma~\ref{l:pix} implies that $t(G)\geq3n/(10\ln(n/2))$. Lemma~\ref{l:S-factor} yields $t(S)\geq~t(G)-1$, whence
\begin{equation}\label{eq:1}
(3m+5)/4\geq 3n/(10\ln(n/2))-1,
\end{equation}
\begin{equation}\label{eq:2}
n-p_{y-2}\geq r\geq1+(m-5)/3=(m-2)/3.
\end{equation}

Observe that
\begin{equation}\label{eq:3}
\frac{m-2}{3}=\frac{3m-6}{9}=\frac{4}{9}\cdot\frac{3m-6}{4}=\frac{4}{9}\cdot\frac{3m+5-11}{4}.
\end{equation}

Combining (\ref{eq:2}) with (\ref{eq:3}) and (\ref{eq:1}), we obtain 
$$n-p_{y-2}\geq\frac{4}{9}\cdot\frac{3n}{10\ln(n/2)}-\frac{4}{9}-\frac{11}{9}=\frac{2n}{15\ln(n/2)}-\frac{15}{9}>\frac{2n}{15\ln(n/2)}-2.$$
Hence, $n-p_{y-2}>\frac{2n}{15\ln(n)}-2$.

Suppose that $n>90000$. Since $89983$ and $89989$ are primes, we have $p_{y-1}\geq 89983$. Lemma~\ref{l:Dusart} implies that $n<p_{y+1}\leq p_y(1+\frac{1}{\ln^3(p_y)})$ and hence $n-p_y<\frac{p_y}{\ln^3(p_y)}\leq\frac{n}{\ln^3(n)}$.
Similarly we infer that $p_y-p_{y-1}\leq\frac{p_{y-1}}{\ln^3(p_{y-1})}<\frac{n}{\ln^3(n)}$ and $p_{y-1}-p_{y-2}<\frac{n}{\ln^3(n)}$.
So $n-p_{y-1}=(n-p_y)+(p_y-p_{y-1})+(p_{y-1}-p_{y-2})<\frac{3n}{\ln^3(n)}$. Therefore, $\frac{3n}{\ln^3(n)}>\frac{2n}{15\ln(n)}-2$.
Since $n>90000$, we have $\ln^2(n)>30$ and hence $\frac{3n}{30\ln(n)}>\frac{3n}{\ln^3(n)}>\frac{2n}{15\ln(n)}-2$. Therefore, $60>\frac{n}{\ln(n)}$.
This inequality is false if $n>90000$; a contradiction.

Suppose that $15000<n\leq90000$. Then according to \cite[Table~1]{PrimeGap} we know that $n-p_y$, $p_y-p_{y-1}$, $p_{y-1}-p_{y-2}\leq72$. Hence, $n-p_{y-2}\leq216$. We found above that
$n-p_{y-2}>\frac{2n}{15\ln(n/2)}-2$. Therefore, $216>\frac{30000}{15\ln(7500)}-2>222$; a contradiction.

Suppose that $1000\leq n\leq 15000$. In this case there are no Fermat primes between $n/2$ and $n$. Arguing as above, we see that $r+p_{y-1}\leq n$. Hence,
$n-p_{y-1}\geq r\geq1+(m-5)/3=(m-2)/3$. Therefore, $m\leq3(n-p_{y-1})+2$. On the other hand, we know that $(3m+5)/4\geq t(S)\geq t(G)-1\geq\pi(n/2,n)-1$, whence
$m\geq(4\pi(n/2,n)-9)/3$. Thus, $3(n-p_{y-1})+2>(4\pi(n/2,n)-9)/3$.
Divide the segment $[1000,15000]$ of values of $n$ into three segments $[1000,1300]$, $[1300,2000]$, $[2000,15000]$.
By performing straightforward calculations, it is possible to compute on these segments the minimal value of $3(n-p_{y-1})+2$ and the maximal value of $(4\pi(n/2,n)-9)/3$. Table~\ref{tab:tL} shows the results.

\begin{table}[!th]
\caption{The maximal values of $3(n-p_{y-1})+2$ and the minimal values of $(4\pi(n/2,n)-9)/3$}\label{tab:tL}
\begin{center}
{\begin{tabular}{|c|c|c|c|}
  \hline
   $[L,R]$ & $\max\limits_{n\in[L,R]}3(n-p_{y-1})+2$  & $\min\limits_{n\in[L,R]}(4\pi(n/2,n)-9)/3$  \\
  \hline
  $[1000, 1300]$  &  83 $(n=1150, p_y=1129, p_{y-1}=1123)$ & 93 $(n=1006)$  \\ 
  $[1300, 2000]$  & 119 $(n=1360, p_y=1327, p_{y-1}=1321)$ & 121 $(n=1300)$ \\
  $[2000, 15000]$ & 149 $(n=9600, p_y=9587, p_{y-1}=9551)$  & 177 $(n=2000)$  \\
  \hline
  \end{tabular}
}
\end{center}
\end{table}
In all cases $(4\pi(n/2,n)-9)/3<3(n-p_{y-1})+2$; a contradiction. This completes the proof of Lemma~\ref{l:ri in K}.
\end{proof}
Now we are ready to obtain a contradiction justifying the proposition.
By Lemma~\ref{l:r1r2r3r4}, there exist primes $r_1,r_2,r_3,r_4\in\pi(S)\setminus\{v\}$ such that $e(r_i,\varepsilon{u})\geq\max(7,(m-5)/3)$ 
for $1\leq i\leq4$, with $r_1r_2\in\omega(S)$, $r_3r_4\in\omega(S)$, $r_1r_4\not\in\omega(S)$, and $r_2r_3\not\in\omega(S)$. Note that, by \cite{05VasVd.t}, the existence of an edge between primes $r$ ans $s$ in $\Gamma(S)$ depends only on numbers $e(r,u)$ and $e(s,u)$. By Lemmas~\ref{l:r in OutS} and \ref{l:ri in K}, we can choose $r_1$, $r_2$, $r_3$, and $r_4$ such that
$(r_i,|\overline{G}/S|\cdot|K|)=1$ for $1\leq i\leq4$. Therefore, $r_1r_2\in\omega(L)$, $r_3r_4\in\omega(L)$, $r_1r_4\not\in\omega(L)$, and $r_2r_3\not\in\omega(L)$.
Hence, $r_1+r_2\leq n$, $r_1+r_4>n$, $r_3+r_4\leq n$, and $r_3+r_2>n$. The first and second inequalities yield $r_4>r_2$, 
and the third and fourth inequalities yield $r_4<r_2$; a contradiction.
\end{proof}

\section{Proof of the Theorem: the case $19\leq n<1000$}
Suppose that $G$ is a finite group such that $\Gamma(G)=\Gamma(L)$, where $L\simeq A_n$ or $L\simeq S_n$ for an integer $n$ satisfying $19\leq n\leq 1000$. Put $\Omega_L=\{p\in\mathbb{P}~|~n/2\leq p\leq n\}$. Lemma~\ref{l:pix} yields $|\Omega_L|\geq3$. By Lemma~\ref{l:S-factor}, there exists a normal subgroup $K$ of $G$ such that $S\leq G/K\leq Aut(S)$ for a nonabelian simple group $S$. Moreover, if $\Omega_S=\pi(S)\cap\Omega$ then $\Omega_S$ is a coclique in $\Gamma(S)$ and $|\Omega_S|\geq|\Omega_L|-1$. Following \cite{Zavar}, we denote by $\mathfrak{S}_r$ the set of nonabelian simple groups $S$ such that $r\in\pi(S)$ and no element of $\pi(S)$ exceeds $r$. 
Denote the $i$-th prime number by $p_i$ and the index of the largest prime less than or equal to $n$ by $y$. 
Lemma~\ref{l:S-factor} implies that either $S\in\mathfrak{S}_{p_y}$ or $S\in\mathfrak{S}_{p_{y-1}}$.
If $S$ is the alternating group $A_t$ then $t\geq p_y$ by Lemma~\ref{l:S is alt} and the theorem is justified in this case. The following lemma completes the proof of the theorem.

\begin{lemma}\label{l:S is alt2} $S$ is an alternating group.
\end{lemma}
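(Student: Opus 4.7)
The plan is to reduce the lemma to a finite case analysis by combining three ingredients: Zavarnitsine's classification \cite{Zavar} of the finite simple groups in $\mathfrak{S}_{p_y}\cup\mathfrak{S}_{p_{y-1}}$, the restriction that $\Omega_S$ is a coclique in $\Gamma(S)$ with $|\Omega_S|\geq|\Omega_L|-1$, and the structural constraints on $K$ developed in the proof of the previous proposition, in particular Lemmas~\ref{l:r in K} and~\ref{l:primes_in_K}. Since $n$ is bounded, for each $n$ the two primes $p_y$ and $p_{y-1}$ are explicit, so the list of candidate simple groups $S$ with largest prime divisor equal to $p_y$ or $p_{y-1}$ is finite and known.

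First I would dispose of sporadic and exceptional groups of Lie type. For any such $S$ one has $t(S)\leq 12$ by \cite{11VasVd.t}, and for a sporadic group also $|\pi(S)|$ is small. For each candidate on the Zavarnitsine list one reads off $\Omega_S=\pi(S)\cap(n/2,n]$ and the adjacencies among its elements; in every instance one verifies that either $|\Omega_S|<|\Omega_L|-1$ or $\Omega_S$ fails to be a coclique in $\Gamma(S)$, contradicting Lemma~\ref{l:S-factor}.

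Next I would address classical groups for the upper part of the range. Once $n\geq167$, Lemma~\ref{l:t(S)_restriction} gives $t(A_n)\geq16$ and the quadrilateral argument based on Lemma~\ref{l:r1r2r3r4} together with Lemmas~\ref{l:r in OutS} and~\ref{l:ri in K} produces four primes $r_1,r_2,r_3,r_4$ satisfying simultaneously $r_1+r_2\leq n$, $r_3+r_4\leq n$, $r_1+r_4>n$, $r_3+r_2>n$, which is impossible. The only real adaptation from the $n\geq1000$ case is that the asymptotic inequality used at the end of Section~3 must be replaced by a direct tabulation on short subintervals of $[167,1000]$, analogous to Table~\ref{tab:tL}, checking that $3(n-p_{y-1})+2$ stays below $(4\pi(n/2,n)-9)/3$. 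Some subintervals may contain a Fermat prime in $(n/2,n)$; there one must confirm that at least two non-Fermat primes remain in $\{p_{y-2},p_{y-1},p_y\}$ so that Lemma~\ref{l:r in K} still applies.

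For the residual range $19\leq n<167$ the lists $\mathfrak{S}_{p_y}$ and $\mathfrak{S}_{p_{y-1}}$ are very short, but $t(S)\geq15$ is no longer guaranteed, so the quadrilateral method is unavailable and the elimination of classical candidates must be done one group at a time, using the adjacency criteria of \cite{05VasVd.t} together with Lemmas~\ref{l:r in K} and~\ref{l:primes_in_K}. For each non-alternating $S$ on the list I would exhibit either two primes in $\Omega_S$ adjacent in $\Gamma(S)$, or a prime pair adjacent in $\Gamma(G)$ whose product exceeds $n$. I expect this small-$n$ analysis of low-rank $L_m(u)$, $U_m(u)$, $S_{2m}(u)$ and $O^\varepsilon_{2m}(u)$ to be the main obstacle: their prime graphs are rich enough that crude coclique bounds do not suffice, and one has to invoke the parabolic structure together with the Hall--Higman-type Lemma~\ref{l:hallhigman}, in the spirit of the proof of Lemma~\ref{l:r in K}, to force a contradiction.
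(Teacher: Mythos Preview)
Your plan takes a substantially more laborious route than the paper's and, more importantly, misidentifies where the real difficulty lies for small $n$.

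For $n\geq 101$ the paper avoids the quadrilateral machinery of Section~3 entirely. The key observation you are missing is that Zavarnitsine's tables \cite{Zavar} already bound the Lie rank: every non-alternating simple group in $\mathfrak{S}_{p_y}\cup\mathfrak{S}_{p_{y-1}}$ for $p_y\leq 997$ is either $L_2(p)$ or a classical group with $\prk(S)\leq 12$ or an exceptional group not of type $E_8$. Hence $t(S)\leq 12$ (for $n\geq167$) or $t(S)\leq 8$ (for $101\leq n<167$) by the tables in \cite{11VasVd.t}, while $t(G)-1\geq 15$, respectively $t(G)-1\geq 10$. This is a two-line contradiction; there is no need to reprove Lemma~\ref{l:ri in K} or to carry out any tabulation analogous to Table~\ref{tab:tL}.

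For $19\leq n\leq 100$ your expectation is inverted. Classical groups are \emph{not} the obstacle: the four-element coclique $\{p_{y-3},p_{y-2},p_{y-1},p_y\}$ combined with Zavarnitsine's Table~1 immediately reduces the list of candidates to $S\in\{Fi_{23},Fi'_{24},U_3(37),J_4,{}^2E_6(2)\}$, with $p_y\in\{19,23,29,43\}$. Your proposed test---checking that $\Omega_S$ is too small or not a coclique---does \emph{not} eliminate these; for instance when $p_y=19$ and $S={}^2E_6(2)$ one has $\Omega_S=\{11,13,17,19\}=\Omega_L$ and this set is a genuine coclique in $\Gamma(S)$. The actual work for these residual cases is a delicate analysis of $\pi(K)$: one must show $K$ is solvable, pass to a suitable Hall subgroup, and use Lemma~\ref{l:hallhigman} together with the modular character table of a subgroup such as $O_8^-(2)\leq{}^2E_6(2)$ to force a forbidden adjacency. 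None of this is anticipated in your outline, and Lemma~\ref{l:r in K} as stated (which presupposes $S$ classical with $\prk(S)>4$) is not applicable here.
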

\begin{proof}

Assume the contrary. 

Suppose that $n\geq167$. Applying Lemmas~\ref{l:S-factor} and \ref{l:t(S)_restriction}, we obtain $t(S)\geq t(G)-1\geq15$. 
According to \cite{Zavar}, either $S\simeq L_2(p)$ or $S$ is a group in Table~3 of \cite{Zavar}, corresponding to $p_y$ or $p_{y-1}$. If $S=L_2(p)$ then $t(S)=3$ according to Table~2 of \cite{11VasVd.t}. Therefore, $S$ is a group  in Table~3 of \cite{Zavar}, in particular, $S$ is not a sporadic simple group. 
If $S$ is a simple classical group then $\prk(S)\leq12$. 
According to Tables~2, 3 of \cite{11VasVd.t}, we find that $t(S)\leq9$. If $S$ is an exceptional 
group of Lie type then $t(S)\leq12$ according to Table~4 of \cite{11VasVd.t}.
In all cases, we obtain $t(S)\leq12$; a contradiction.

Suppose that $100<n<167$. Then, as above, either $S\simeq L_2(p)$ or $S$ is a group in Table~3 of \cite{Zavar}, corresponding to $p_y$ or $p_{y-1}$. Now we show that $t(S)\leq8$.
If $S=L_2(p)$ then $t(L)=3$ according to Table~2 of \cite{11VasVd.t}. Assume that $S$ is a group in Table~3 of \cite{Zavar}, in particular, $S$ is not a sporadic group.
If $S$ is a linear or unitary group then $\prk(S)\leq12$, and hence $t(S)\leq6$ by \cite[Table~2]{11VasVd.t}. 
If $S$ is a symplectic or orthogonal group then $\prk(S)\leq8$, and hence $t(S)\leq7$ by \cite[Table~3]{11VasVd.t}. 
If $S$ is an exceptional group of Lie type then $S$ is not of type $E_8$ according to \cite[Table~3]{Zavar}.
Therefore, Table~4 of \cite{11VasVd.t} shows that $t(S)\leq8$. Thus, $t(S)\leq8$ in all cases. Then~$t(G)\leq t(S)+1\leq9$. However, it is easy to verify that $t(S_n)\geq t(A_n)\geq11$ if $n\geq101$; a contradiction.

Suppose that $19\leq n\leq100$. It is easy to verify that $p_{y-3}+p_{y-2}>n$, so $p_{y-3}$, $p_{y-2}$, $p_{y-1}$, and $p_y$ are pairwise nonadjacent in $\Gamma(G)$. Lemma~\ref{l:S-factor} implies that $|\{p_{y-3},p_{y-2},p_{y-1}\}\cap\pi(S)|\geq2$. Moreover, if $p_y\not\in\pi(S)$ then
$p_{y-3},p_{y-2},p_{y-1}\in\pi(S)$. Verifying these conditions for groups in $\mathfrak{S}_{p_y}$ using \cite[Table~1]{Zavar}, we infer that $p_y\in\{19,23,29,43\}$ and $S\in\{Fi_{23},Fi^\prime_{24},U_3(37), J_4, {^2}E_6(2)\}$. Now we consider each of these possibilities for $p_y$ and $S$. 

If $p_y=43$ then $S\in\{U_3(37),J_4\}$. Observe that $17,41\not\in\pi(S)$; therefore 17 and 41 divide $|G|/|S|$; a contradiction with Lemma~\ref{l:S-factor}, since $\{17,41,43\}$ is a coclique in $\Gamma(L)$.

If $p_y=29$ then $S=Fi'_{24}$ and $n\in\{29,30\}$. Since $19\not\in\pi(S)$, it follows that $19\in\pi(K)$. Assume that $n=29$.
Then $11$ and $17$ are adjacent in $\Gamma(G)$ and nonadjacent in $\Gamma(S)$. Therefore, either $11\in\pi(K)$ or $17\in\pi(K)$. 
Since $29$ is nonadjacent to any vertex in $\Gamma(G)$, an element of order 29 in $S$ acts fixed-point-freely on $K$, 
and so $K$ is nilpotent by the Thomson theorem. Therefore, either $11\cdot19\in\omega(G)$ or $17\cdot19\in\omega(G)$; a contradiction.
Assume that $n=30$. Then $13$ and $17$ are adjacent in $\Gamma(G)$ and nonadjacent in $\Gamma(S)$. Arguing as above, we find that either $13\cdot19\in\omega(G)$ or $17\cdot19\in\omega(G)$; a contradiction. 

If $p_y=23$ then $S=Fi_{23}$ or $S={^2}E_6(2)$. In the first case $19\in\pi(K)$.
If $n=23$ then $K$ is nilpotent as above. Since $55\not\in\omega(S)$, either 5 or 11 divides the order of $K$, hence either $5\cdot19\in\omega(G)$ or $11\cdot19\in\omega(G)$; a contradiction. If $n\geq24$ then 11 and 13 are adjacent in $\Gamma(G)$ and nonadjacent in $\Gamma(S)$, hence either 11 or 13 divides $|K|$. Since $\{11,19,23\}$ and $\{13,19,23\}$ are cocliques in $\Gamma(L)$, we arrive at a contradiction with Lemma~\ref{l:S-factor}. Suppose now that $S={^2}E_6(2)$. Then $23\in\omega(K)$. If $n=23$ then $5\cdot17\in\omega(G)\setminus\omega(S)$, so either 5 or 17 divides $|K|$. This contradicts Lemma~\ref{l:S-factor} because $\{5,19,23\}$ and $\{17,19,23\}$ are cocliques in $\Gamma(L)$.
If $n\geq24$ then we arrive at a contradiction as above because $11\cdot13\in\omega(G)\setminus\omega(S)$.

If $p_y=19$ then $S={^2}E_6(2)$ and $19\leq n\leq 22$. Observe that $7\cdot11\not\in\omega(S)$,
so either $7\in\pi(K)$ or $11\in\pi(K)$. Since $\{7,17,19\}$ and $\{11,17,19\}$ are cocliques in $\Gamma(G)$,
Lemma~\ref{l:S-factor} implies that $17,19\not\in\pi(K)$. Now we prove that $K$ is solvable.
Assume that $K$ is nonsolvable and $R$ is a nonabelian composition factor of $K$. All elements of $\pi(R)$ are at most
13, so $19\not\in\pi(\Aut R)$ by Lemma~\ref{l:Vakula}. Clearly, there exists a prime $s\in\pi(R)$
with $s>3$. Lemma~\ref{l:primes_in_K} implies that $19s\in\omega(G)$; a contradiction. Hence $K$ is solvable.
Suppose that $r\in\{7,11\}\cap\pi(K)$.
To obtain a contradiction, we show that either $17r\in\omega(G)$ or $19r\in\omega(G)$. Take a Hall $\{2,3\}'$-subgroup $P$ of $K$. Applying Frattini's argument, we may assume that $P$ is normal in $G$. Since $19\in\pi(N_G(P))$ and 19 is not adjacent to every element of $\pi(P)$ in $\Gamma(G)$,
we infer that $P$ is nilpotent and hence we may assume that $P$ is an $r$-group. Passing to the quotion of $G$ by $\Phi(P)$, we replace $P$ with an elementary abelian group. Since $C_G(P)$ is a normal subgroup of $G$,
we see that $C_G(P)\leq K$. Since $P\leq Z(C_G(P))$, the Hall $\{r\}'$-subgroup of $C_G(P)$ is normal in $G$,
so we may assume that $C_G(P)=P$. Observe that $K/P$ is a $\{2,3\}$-group, so $G/P$ is an extension of a $\{2,3\}$-group by ${^2}E_6(2)$. Let $H$ be a subgroup in $G/P$ of minimal order such that there exists a normal $\{2,3\}$-subgroup $T$ of $H$ 
with $H/T\simeq {^2}E_6(2)$. We claim that $T\leq Z(H)$. Take a Sylow subgroup $R$ of $T$. Then $N_H(R)/N_T(R)\simeq H/T$ by Frattini's argument. Since $H$ is minimal, it follows that $N_H(R)=H$. Thus, $T$ is nilpotent. Take $U\in Syl_{19}(H)$ and
assume that $[R,U]\neq 1$. Then $RU$ acts by conjugation on $P$ and Lemma~\ref{l:hallhigman} yields that $19r\in\omega(G)$; a contradiction. Therefore $[R,U]=1$, and hence $C_H(R)\not\subseteq T$. Since $C_H(R)$ is normal in $H$
and $H/T$ is a simple group, we infer that $C_H(R)T=H$. Therefore, $C_H(R)/C_T(R)\simeq H/T$. Since $H$ is minimal, it follows that $C_H(R)=H$ and hence $T\leq Z(H)$, as claimed. According to \cite{Atlas}, the group ${^2}E_6(2)$ includes a subgroup isomorphic to $O_8^-(2)$. Take a subgroup $M$ of $H$ with $M/T\simeq O_8^-(2)$. We claim that $M$ includes a subgroup isomorphic to $O_8^-(2)$. The minimal subgroup $M^{\infty}=\bigcap\limits_{n\geq0}M^{(n)}$ in the derived series of $M$ is a perfect group and hence $TM^{\infty}/T\simeq{O}_8^-(2)$. Therefore, $M^{\infty}/(T\cap M^{\infty})\simeq{O}_8^-(2)$. Since $T\cap M^{\infty}\leq Z(M^{\infty})$, we see that $|T\cap M^{\infty}|$ divides the order of the Schur multiplier of $O_8^-(2)$ and hence $T\cap M^{\infty}=1$. Thus, $M^{\infty}$ is the required group. 
Now $O_8^-(2)$ acts on $P$ and by Lemma~\ref{l:semidir} we may assume that the extension of $O_8^-(2)$ by $P$ splits. Observe that $7\in\pi(O_8^-(2))$ and $11\not\in\pi(O_8^-(2))$. 
From the character table of $O_8^-(2)$ in the case $r=11$ and the table of 7-modular characters of $O_8^-(2)$, when
$r=7$, we derive that $17r\in\omega(G)$; a contradiction. The proof of Lemma~\ref{l:S is alt} is complete.
\end{proof}

\section{Proof of the Corollary}
\begin{proof}
Suppose that $G$ is a finite group such that $\Gamma(G)=\Gamma(A_n)$ or $\Gamma(G)=\Gamma(S_n)$ for some integer $n\geq19$ and $n-p\leq4$, where $p$ is the largest prime less than or equal to $n$. By Theorem~1, there exists a normal subgroup $K$ of $G$ such that $A_t\leq G/K\leq S_t$ where $t\geq{p}$ and $p\not\in\pi(K)$. Since $5p\not\in\omega(G)$, we obtain $t\leq p+4$. Assume that $K$ is nonsolvable and $S$ is a nonabilean composition factor of $K$.
Then there exists $r\in\pi(S)$ with $r\geq5$. Lemma~\ref{l:Vakula} imply that $p\not\in\Aut(S)$. 
Finally, by Lemma~\ref{l:primes_in_K}, we infer that $rp\in\omega(G)$; a contradiction. Thus, $K$
is solvable and the corollary is established.
\end{proof}

{\it Acknowledgements}. The authors wish to thank Andrey Vasili$'$ev  for valuable comments on the manuscript and Gerhard Paseman for pointing out Dusart's paper.

\bibliographystyle{amsplain}

\Addresses

\end{document}